\documentclass{amsart}

\usepackage[utf8]{inputenc}
\usepackage[english]{babel}
\usepackage{amsmath}
\usepackage{amsthm}
\usepackage{amssymb}
\usepackage{array}
\usepackage{diagbox}
\usepackage{dsfont}
\usepackage{etoolbox}
\usepackage{faktor}
\usepackage{hyperref}
\usepackage{cleveref}
\usepackage{mathdots}
\usepackage{mathtools}
\usepackage{multirow}
\usepackage{stackrel}
\usepackage{stmaryrd}
\usepackage{subfigure}
\usepackage{tikz}

\usetikzlibrary{patterns,arrows,cd,calc,matrix,decorations.pathreplacing}

\title[The cohomology rings of the configuration spaces of the torus]{The cohomology rings of the unordered configuration spaces of the torus}
\author[R. Pagaria]{Roberto Pagaria}
\address{Roberto Pagaria}
\address{Scuola Normale Superiore\\ Piazza dei Cavalieri 7, 56126 Pisa\\ Italia}
\email{roberto.pagaria@sns.it}

\newtheorem{thm}{Theorem}[section]

\newtheorem{cor}[thm]{Corollary}
\newtheorem{lemma}[thm]{Lemma}
\theoremstyle{definition}
\newtheorem{definition}[thm]{Definition}
\newtheorem{example}[thm]{Example}
\theoremstyle{remark}
\newtheorem{remark}[thm]{Remark}

\newcommand*\N{\mathbb{N}}
\newcommand*\Z{\mathbb{Z}}
\newcommand*\C{\mathbb{C}}
\newcommand*\Q{\mathbb{Q}}

\newcommand*\V{\mathbb{V}}
\newcommand*\Sn{\mathfrak{S}_n}
\newcommand{\defeq}{\stackrel{\textnormal{def}}{=}}

\DeclareMathOperator{\SL}{SL_2}
\DeclareMathOperator{\slC}{\mathfrak{sl}_2(\C)}
\newcommand*\dga{dga}
\newcommand*\Confn{\mathcal{F}^n(E)}
\DeclareMathOperator{\Fix}{Fix}

\DeclareMathOperator{\Ind}{Ind}
\newcommand\Mn{\mathcal{M}^n(E)}
\DeclareMathOperator{\MCG}{MCG}
\DeclareMathOperator{\Res}{Res}
\DeclareMathOperator{\sgn}{sgn}
\newcommand{\UA}{U\!A}
\newcommand{\UB}{U\!B}
\newcommand{\co}{\colon \!}
\newcommand*\Uconfn{\mathcal{C}^n(E)}

\newcolumntype{C}[1]{>{\centering\arraybackslash}p{#1}}

\newcommand{\ie}{ie}

\makeatletter
\providecommand*{\diff}%
	{\@ifnextchar^{\DIfF}{\DIfF^{}}}
\def\DIfF^#1{%
	\mathop{\mathrm{\mathstrut d}}%
		\nolimits^{#1}\gobblespace}
\def\gobblespace{%
	\futurelet\diffarg\opspace}
\def\opspace{%
	\let\DiffSpace\!%
	\ifx\diffarg(%
		\let\DiffSpace\relax
	\else
		\ifx\diffarg[%
			\let\DiffSpace\relax
		\else
			\ifx\diffarg\{%
				\let\DiffSpace\relax
			\fi\fi\fi\DiffSpace}
\robustify{\diff}

\newcommand{\dd}{\diff}

\makeatletter
\newcommand*{\bigcdot}{%
  {\mathbin{\mathpalette\bigcdot@{}}}%
}
\newcommand*{\bigcdot@scalefactor}{.75}
\newcommand*{\bigcdot@widthfactor}{1.4}
\newcommand*{\bigcdot@}[2]{%
  \sbox0{$#1\vcenter{}$}
  \sbox2{$#1\cdot\m@th$}%
  \hbox to \bigcdot@widthfactor\wd2{%
    \hfil
    \raise\ht0\hbox{%
      \scalebox{\bigcdot@scalefactor}{%
        \lower\ht0\hbox{$#1\bullet\m@th$}%
      }%
    }%
    \hfil
  }%
}
\makeatother
\robustify{\bigcdot}

\makeatletter 
\newcommand\mynobreakpar{\par\nobreak\@afterheading} 
\makeatother

\begin{document}
\begin{abstract}
We study the cohomology ring of the configuration space of unordered points in the two dimensional torus.
In particular, we compute the mixed Hodge structure on the cohomology, the action of the mapping class group, the structure of the cohomology ring and we prove the formality over the rationals.
\end{abstract}

\maketitle

\section*{Introduction}
We fully describe the cohomology with rational coefficients of the configuration spaces of unordered points in an elliptic curve (frequently called torus).

Configuration spaces of points are related to physics (state spaces of non-colliding particles on a manifold), robotics (motion planning), knot theory, and topology.
Configuration spaces give invariants of the homeomorphism type of the base space.
In the algebraic setting, configuration spaces are open in the moduli spaces of points.

Since the literature is very extensive, we compare our work only with the main results on the (co-)homology of configuration spaces.
The first computation of the cohomology algebra of configuration spaces is due to Arnol'd \cite{Arnold69,Arnold70} in the case of $\mathbb{R}^2$.
This result has been generalized by Cohen, Lada, and May \cite{Cohen76} to the configuration space of $\mathbb{R}^n$ and later by Goresky and Macpherson \cite{GM83}.
Partially additive results have been obtained: by B\" odigheimer and Cohen \cite{BC88} for once-punctured oriented surfaces, by the same authors and Taylor \cite{BCT89} for odd dimensional manifolds, and by Drummond-Cole and Knudsen \cite{DCK2017} for surfaces in general.
However there is no description of the ring structure; we provide it in the case of elliptic curves.
The Betti numbers $\mathcal{C}^n(X)$ are described in the following cases: for $X=\mathbb{P}^2(\mathbb{R})$ by Wang \cite{WangP2R}, for $X$ a sphere by Salvatore \cite{Sal04}, for $X=\mathbb{P}^2(\C)$ by Felix and Tanré \cite{FT05} and for elliptic curves by Maguire and Schiessl \cite{Maguire2016,Schiessl16}.

In this paper we improve the previous results on configuration spaces in an elliptic curve in three ways.
We describe:
\begin{itemize}
\item the mixed Hodge structure on the cohomology (\Cref{thm:MHP_Groth_ring}),
\item the action of the mapping class group (\Cref{thm:MHP_Groth_ring}),
\item the ring structure (\Cref{thm:main_ring}).
\end{itemize}
The formality result over the rationals is proven in \Cref{cor:formality}.

We prove these results using the Kri\v z model \cite{Kriz94, Totaro, Bibby16, Dupont15} and the representation theory on it \cite{AAB14, Azam15}.

In \Cref{sect:model} we recall the Kri\v z model, then in \Cref{section2} we improve the result on the decomposition of the Kri\v z model into irreducible representations, see \Cref{thm:strong_dec_E2}.
Descriptions of the mixed Hodge structure and of the action of the mapping class group are obtained in \Cref{section3} by computing the cohomology of the model.
Finally, the ring structure is presented in the last section.

\section{The Kri\v z model} \label{sect:model}

Let $E$ be an elliptic curve and consider the configuration space of $n$ ordered distinct points
\[\Confn \defeq \{ \underline{p} \in E^n \mid p_i \neq p_j \}.\]

The symmetric group $\Sn$ acts on $\Confn$ by permuting the coordinates and the quotient is the configuration space of $n$ unordered points
\[ \Uconfn \defeq \Confn/\Sn. \]

We also consider the space $\Mn$, defined by
\[ \Mn \defeq \big\{ \underline{p} \in \Confn \mid \sum p_i =0 \big\}.\]
Notice that there exists a non canonical isomorphism $\Confn \cong E \times \Mn$.

In this section we recall a rational model for the cohomology algebra of $\Confn$.
The model is a commutative differential bi-graded algebra (\dga) that can be obtain in two different ways: as a specialization of the Kri\v z model for the configuration spaces or as the second page of the Leray spectral sequence (also known as the Totaro spectral sequence) for elliptic arrangements.
Our main references for the first approach are \cite{Kriz94,AAB14,Azam15} and for the second one are \cite{Totaro,Dupont15,Bibby16}.
In the following we define the models for the cohomology of $\Confn$ and of $\Mn$.

Let $\Lambda$ be the exterior algebra over $\Q$ with generators
\[\{x_i,y_i,\omega_{i,j}\}_{1 \leq i < j \leq n}.\]
We set the degree of each $x_i$ and $y_i$ equal to $(1,0)$ and the degree of $\omega_{i,j}$ equal to $(0,1)$.
Define the differential $\dd \co \Lambda \to \Lambda$ of bi-degree $(2,-1)$ on generators as follows: $\dd x_i=0$ and $\dd y_i=0$  for $i=1, \dots, n$ and 
\[ \dd \omega_{i,j} \defeq (x_i-x_j)(y_i-y_j).\]
For the sake of notation we set $\omega_{i,j}:=\omega_{j,i}$ for $i>j$.
 
We define the \dga\ $A^{\bigcdot,\bigcdot}$ as the quotient of $\Lambda$ by the following relations:
\begin{align*}
(x_i-x_j)\omega_{i,j}=0 \quad \textnormal{and} \quad (y_i-y_j)\omega_{i,j}=0, \\
\omega_{i,j}\omega_{j,k}-\omega_{i,j}\omega_{k,i}+\omega_{j,k} \omega_{k,i} =0 .
\end{align*}
Notice that the ideal is preserved by the differential map, thus the differential $\dd \co A^{\bigcdot,\bigcdot} \to A^{\bigcdot,\bigcdot}$ is well defined.

\begin{remark}
The model $A^{\bigcdot,\bigcdot}$ coincides with the Kri\v z model $E_\bigcdot^\bigcdot$ introduced in \cite{Kriz94} up to shifting the degrees, \ie\
\[ A^{p,q} \cong E^{p+q}_q.\]
The \dga\ $E_\bigcdot^\bigcdot$ is a rational model for $X$, as shown in \cite[Theorem 1.1]{Kriz94}.
\end{remark}

In order to study the cohomology of $A^{\bigcdot,\bigcdot}$ we need to introduce the elements $u_{i,j}=x_i-x_j$, $v_{i,j}=y_i-y_j$ and $\gamma=\sum_{i=1}^n x_i$, $\overline{\gamma}=\sum_{i=1}^n y_i \in A^{1,0}$.

We define the \dga\ $B^{\bigcdot,\bigcdot}$ as the subalgebra of $A^{\bigcdot,\bigcdot}$ generated by $u_{i,j}, v_{i,j}$ and $\omega_{i,j}$ for $1\leq i < j \leq n$.
Let $D^{\bigcdot, 0}$ be the subalgebra of $A^{\bigcdot,\bigcdot}$ generated by $\gamma$ and $\overline{\gamma}$ endowed with the zero differential map.
Notice that 
\begin{equation} \label{eq:iso}
A^{\bigcdot,\bigcdot} \cong B^{\bigcdot,\bigcdot} \otimes_\Q D^{\bigcdot,0}
\end{equation}
as differential algebras and  that $D^{\bigcdot,0}$ is the cohomology ring of the elliptic curve $E$.

The mixed Hodge structure on the cohomology of algebraic varieties defines a bigrading compatible with the algebra structure (see~\cite[p.81]{DelUtile} or \cite[Theorem 8.35]{VoisinI}).
In our case the bigrading given by the mixed Hodge structure coincides with the one given by the Leray spectral sequence as shown by Totaro \cite[Theorem 3]{Totaro} and by Gorinov \cite{Gorinov17}.
Explicitly, the subspace $A^{p,q}$ has weight $p+2q$ and degree $p+q$.

The following result is a particular case of \cite[Theorem 3.3]{Bibby16} and of \cite[Theorem 1.2]{Dupont15}.
\begin{thm}
The cohomology algebra of $\Confn$ (or of $\Mn$) with rational coefficients is isomorphic to the cohomology of the \dga\ $A^{\bigcdot,\bigcdot}$ (respectively of $B^{\bigcdot,\bigcdot}$).
Moreover, the $n^2$-sheeted covering 
\begin{align*}
E \times \Mn & \to \Confn \\
(q, \underline{p}) \qquad & \mapsto (p_i+q)_{i=1,\dots,n}
\end{align*}
induces the isomorphism of eq.~\eqref{eq:iso}.
\end{thm}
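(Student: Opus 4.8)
The plan is to obtain the two cohomology isomorphisms from the cited results on elliptic arrangements, and then to realise the decomposition of eq.~\eqref{eq:iso} by computing the pullback of the covering map on generators. The only point that needs real work is matching the Bibby--Dupont models with the explicit presentations $A^{\bigcdot,\bigcdot}$ and $B^{\bigcdot,\bigcdot}$; everything else is formal.

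For the first assertion, realise both spaces as arrangement complements: $\Confn$ is the complement in $E^n$ of the elliptic arrangement $\mathcal A=\{\Delta_{i,j}\}_{i<j}$ with $\Delta_{i,j}=\{\underline p\mid p_i=p_j\}$, while $\Mn$ is the complement, inside the abelian subvariety $E_0:=\{\underline p\in E^n\mid \sum_k p_k=0\}\cong E^{n-1}$, of the induced arrangement $\{\Delta_{i,j}\cap E_0\}_{i<j}$. By \cite[Theorem 3.3]{Bibby16} (equivalently \cite[Theorem 1.2]{Dupont15}), the Leray spectral sequence of the open inclusion of such a complement into its ambient abelian variety degenerates at the third page, and its second page, with its product, computes the rational cohomology algebra of the complement. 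It remains to identify that second page with $A^{\bigcdot,\bigcdot}$ (resp.\ with $B^{\bigcdot,\bigcdot}$), \ie\ to match generators, relations and differential: the degree-$(1,0)$ part is $H^1$ of the ambient abelian variety, spanned by the $x_i,y_i$ for $E^n$ and by the $u_{i,j},v_{i,j}$ for $E_0$ (since $\sum_i x_i$ and $\sum_i y_i$ restrict to $0$ on $E_0$); the degree-$(0,1)$ generators $\omega_{i,j}$ are the Gysin classes of the $\Delta_{i,j}$; the relations $u_{i,j}\omega_{i,j}=v_{i,j}\omega_{i,j}=0$ express that $\omega_{i,j}$ is supported on $\Delta_{i,j}$; the triple relation is the Orlik--Solomon relation of the codimension-$2$ flat $\Delta_{i,j}\cap\Delta_{j,k}$; and $\dd \omega_{i,j}$ is the Leray differential, namely the pullback of the class of the diagonal of $E$, which equals $(x_i-x_j)(y_i-y_j)$ by the Künneth formula for $[\Delta_E]\in H^2(E\times E)$.

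For the second assertion, observe that $f\co E\times\Mn\to\Confn$ is the restriction to arrangement complements of the isogeny $\bar f\co E\times E_0\to E^n$, $(q,\underline p)\mapsto(p_i+q)_i$, whose kernel $\{(q,(-q,\dots,-q))\mid nq=0\}\cong E[n]$ has order $n^2$; hence $\bar f$ is étale of degree $n^2$ and $f$ is an $n^2$-sheeted covering. Because $\bar f^{-1}(\Delta_{i,j})=E\times(\Delta_{i,j}\cap E_0)$ and $\bar f$ is étale, functoriality of the Leray spectral sequence yields a morphism of second pages, \ie\ a map of \dga s $\varphi\co A^{\bigcdot,\bigcdot}\to D^{\bigcdot,0}\otimes_\Q B^{\bigcdot,\bigcdot}$, where the Künneth splitting for $E\times E_0$ identifies $D^{\bigcdot,0}=H^\bigcdot(E)$ with the $E$-factor. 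On generators, the group law of $E$ gives $\varphi(x_i)=\alpha+\tilde x_i$ and $\varphi(y_i)=\overline\alpha+\tilde y_i$, where $\alpha,\overline\alpha$ generate $H^1(E)=D^{1,0}$ and $\tilde x_i,\tilde y_i\in H^1(E_0)=B^{1,0}$ are the restrictions of $x_i,y_i$, while $\varphi(\omega_{i,j})=\omega_{i,j}$; hence $\varphi(u_{i,j})=u_{i,j}$, $\varphi(v_{i,j})=v_{i,j}$, and $\varphi(\gamma)=\sum_i(\alpha+\tilde x_i)=n\alpha$, $\varphi(\overline\gamma)=n\overline\alpha$, because $\sum_i\tilde x_i=\sum_i\tilde y_i=0$ in $H^\bigcdot(E_0)$. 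Thus $\varphi$ is --- up to the rescaling $\gamma\mapsto n\gamma$, $\overline\gamma\mapsto n\overline\gamma$ of $D^{\bigcdot,0}$, which is harmless since $n$ is invertible in $\Q$ --- exactly the multiplication isomorphism of eq.~\eqref{eq:iso}; in particular $\varphi$ is an isomorphism of \dga s, and passing to cohomology gives the last statement, compatibly with the fact that $f^*$ is injective over $\Q$ (being split by the transfer).

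I expect the main obstacle to be the identification in the second paragraph: reconciling the abstract model of \cite{Bibby16,Dupont15} with the presentations $A^{\bigcdot,\bigcdot}$ and $B^{\bigcdot,\bigcdot}$, and checking that the algebra structure survives the degeneration of the spectral sequence. Once this is done, the covering computation and the comparison with eq.~\eqref{eq:iso} reduce to standard bookkeeping with the group law on $E$ and with the functoriality of the Leray spectral sequence.
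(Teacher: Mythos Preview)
The paper gives no proof of this theorem: it is stated as a particular case of \cite[Theorem~3.3]{Bibby16} and \cite[Theorem~1.2]{Dupont15}, with no further argument. Your proposal is therefore not to be compared against a proof in the paper, but rather assessed as a correct derivation from those references --- and it is. You correctly realise $\Confn$ and $\Mn$ as complements of elliptic arrangements in $E^n$ and in $E_0$, identify the Bibby--Dupont second page with the explicit presentations $A^{\bigcdot,\bigcdot}$ and $B^{\bigcdot,\bigcdot}$, and then, for the covering statement, compute the kernel of the isogeny $\bar f$ and the pullback on generators to recover the multiplication isomorphism of eq.~\eqref{eq:iso} up to the harmless rescaling $\gamma\mapsto n\gamma$, $\overline\gamma\mapsto n\overline\gamma$.

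In short, your write-up supplies strictly more than the paper does: the paper simply invokes the cited theorems, whereas you spell out the matching of generators, relations and differential, and you verify explicitly that $f^*$ realises eq.~\eqref{eq:iso}. The only caveat worth flagging in your final paragraph is already the right one: the non-trivial step is checking that the abstract Orlik--Solomon-type model of \cite{Bibby16,Dupont15} agrees on the nose with the presentation by $x_i,y_i,\omega_{i,j}$ and the stated relations, and that the multiplicative structure survives degeneration; both are indeed contained in the cited references.
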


\section{Representation theory on the Kri\v z model} \label{section2}

Now we study the action of the symmetric group $\Sn$ and of $\SL(\Q)$ on the algebras $A^{\bigcdot,\bigcdot}$ and $B^{\bigcdot,\bigcdot}$.
Those actions are given by a geometric action on $\Confn$.
For general reference about the representation theory of the Lie group and of the Lie algebra we refer to \cite{Hall} and to \cite{FH91}, respectively.
The cases of $\SL(\C)$ and of $\slC$ can be found in \cite{GWbook}.

\subsection{Definition of the actions}
\label{subsec:def_actions}

Consider the action of $\Sn$ on $\Confn$ defined by
\[ \sigma^{-1} \cdot (p_1, \dots, p_n)= (p_{\sigma(1)}, \dots, p_{\sigma(n)})\]
for all $\sigma \in \Sn$.
This induces an action on $A^{\bigcdot,\bigcdot}$ and on $B^{\bigcdot,\bigcdot}$ defined by
\begin{align*}
&\sigma^{-1}(x_i)=x_{\sigma(i)}, \\
&\sigma^{-1}(y_i)=y_{\sigma(i)}, \\
&\sigma^{-1}(\omega_{i,j})=\omega_{\sigma(i), \sigma(j)}
\end{align*}
for all $1\leq i<j\leq n$ and all $\sigma \in \Sn$.

The mapping class group $\MCG(E)$ acts naturally on $\Confn$ and on $\Uconfn$.
\begin{thm}[Theorem 2.5 \cite{BDprimer}]
The mapping class group $\MCG(E)$ of the torus is isomorphic to $\SL(\Z)$ and the isomorphism is given by the natural action of $\MCG(E)$ on $H^1(E;\Z)$.
\end{thm}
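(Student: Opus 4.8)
The plan is to realize $E$ topologically as the quotient $\mathbb{R}^2/\Z^2$ and to exploit the fact that every self-homeomorphism of this quotient lifts to the universal cover $\mathbb{R}^2$. First I would record that $H^1(E;\Z)\cong\Z^2$ and that, by Poincar\'e duality for the closed oriented surface $E$, the action of an orientation-preserving homeomorphism on $H^1(E;\Z)$ is identified with its (contragredient) action on $H_1(E;\Z)\cong\pi_1(E)\cong\Z^2$. A homeomorphism isotopic to the identity acts trivially on cohomology, so this yields a well-defined group homomorphism
\[ \rho\co\MCG(E)\longrightarrow\operatorname{Aut}\bigl(H^1(E;\Z)\bigr)\cong\mathrm{GL}_2(\Z). \]
Since the elements of $\MCG(E)$ preserve the orientation they fix the generator of $H^2(E;\Z)\cong\Z$, and because the cup product $H^1\times H^1\to H^2$ is a perfect pairing the image of $\rho$ lies in $\SL(\Z)$.

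For surjectivity I would note that a matrix $A\in\SL(\Z)$ induces a linear automorphism of $\mathbb{R}^2$ preserving the lattice $\Z^2$, hence descends to an orientation-preserving (as $\det A=1>0$) homeomorphism $f_A$ of $E$. The map induced by $f_A$ on $\pi_1(E)\cong\Z^2$ is exactly $A$, so $\rho([f_A])$ is the corresponding automorphism of $H^1$; letting $A$ vary over $\SL(\Z)$ shows $\rho$ is onto.

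The crux, and the step I expect to be the main obstacle, is injectivity. Suppose $[f]\in\ker\rho$, so $f$ acts trivially on $H^1(E;\Z)$, equivalently on $\pi_1(E)\cong\Z^2$. Choose a lift $\tilde f\co\mathbb{R}^2\to\mathbb{R}^2$. Triviality of the induced map on $\pi_1$ together with equivariance under the deck group $\Z^2$ forces $\tilde f(x+v)=\tilde f(x)+v$ for all $v\in\Z^2$; hence $g(x)\defeq\tilde f(x)-x$ is $\Z^2$-periodic, in particular bounded, and the straight-line homotopy $F_t(x)=x+t\,g(x)$, $t\in[0,1]$, is again $\Z^2$-equivariant and therefore descends to a homotopy on $E$ from $\mathrm{id}_E$ to $f$. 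Thus $f$ is homotopic to the identity. To conclude that $[f]=1$ in $\MCG(E)$ one must promote this homotopy to an isotopy: I would invoke the classical theorem that homotopic orientation-preserving homeomorphisms of a closed surface are isotopic (see \cite{BDprimer}); alternatively one argues directly that $f$ fixes the isotopy classes of the two standard generating simple closed curves — a simple closed curve on the torus is determined up to isotopy by its homology class — cuts $E$ along them to obtain a disc, and applies the Alexander lemma that the mapping class group of a disc rel boundary is trivial. Either route gives $\ker\rho=1$, and together with surjectivity this proves $\MCG(E)\cong\SL(\Z)$ via the action on $H^1(E;\Z)$.
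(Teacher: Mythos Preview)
The paper does not prove this statement at all: it is quoted verbatim as Theorem~2.5 of the cited reference \cite{BDprimer} and used as a black box, so there is no ``paper's own proof'' to compare against. Your argument is the standard one that the reference itself gives, and it is essentially correct.

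A couple of small remarks. First, the straight-line map $F_t(x)=x+t\,g(x)$ need not be a homeomorphism for intermediate $t$, so --- as you correctly flag --- it only produces a homotopy, and the upgrade to an isotopy genuinely requires one of the two devices you mention (the theorem that homotopic homeomorphisms of closed surfaces are isotopic, or the cut-along-curves argument reducing to the Alexander lemma). Second, in your surjectivity step you should be slightly careful about whether $\rho([f_A])$ equals $A$ or its transpose/inverse, since passing from $H_1$ to $H^1$ dualizes; this is harmless for the conclusion because $A\mapsto (A^{-1})^{t}$ is an automorphism of $\SL(\Z)$, but it is worth saying explicitly which identification you are using. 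With those caveats your sketch is complete.
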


Let $f$ be an automorphism of $E$, the map induces the following vertical morphisms
{\setlength\mathsurround{0pt}
\begin{center}
\begin{tikzcd}
\Confn \arrow[r, hook] \arrow[d, "f^n_{|\Confn}" left] & E^n \arrow[d, "f^n"] \\
\Confn \arrow[r, hook] & E^n
\end{tikzcd}
\end{center}
}
and by functoriality of the Leray spectral sequence it induces the action of $\SL(\Z)$ on $A^{\bigcdot,\bigcdot}$.
We explicitly describe this action on the generators $\omega_{i,j}$, $x_i$, and $y_i$: since $f^n \co E^n \to E^n$ fixes the divisor $\{p_i=p_j\}$, then $f \cdot \omega_{i,j} =\omega_{i,j}$.
The other generators belongs to $A^{1,0}= H^1(E^n) \cong H^1(E)^{\otimes n}$.
Therefore the action of $\MCG(E) \cong \SL (\Z)$ on $A^{1,0}$ is given by 
\[ \left( \begin{array}{cc}
a & b \\ 
c & d
\end{array} \right) \cdot x_i = ax_i+cy_i \quad \quad
\left( \begin{array}{cc}
a & b \\ 
c & d
\end{array} \right) \cdot y_i = bx_i+dy_i. \]
This action extends to $\SL(\Q)$ and since the actions of $\Sn$ and of $\SL(\Q)$ commute, then $A^{\bigcdot, \bigcdot}$, $B^{\bigcdot, \bigcdot}$ and $D^{\bigcdot,0}$ become $\Sn \times \SL(\Q)$-modules.

\subsection{Decomposition into \texorpdfstring{$\Sn$}{Sn}-representations}
We recall a result of \cite[Theorem 3.15]{AAB14} on the decomposition of $A^{\bigcdot,\bigcdot}$ into $\Sn$-modules.
The notations used here follow the ones in \cite{AAB14}.

Let $L_*=(\lambda_1, \dots, \lambda_t)$ be a partition of the number $n$, \ie\ $\lambda_i \in \N_+$ and $\sum_{i=1}^t \lambda_i=n$.
We mark all blocks with labels in $\{1,x,y,xy\}$, an ordered basis of $H^\bigcdot(E)$.
The order is $1 \prec x \prec y \prec xy$.

\begin{definition}
A \textit{marked partition} $(L_*,H_*)$ is a partition $L_* \vdash n$ together with marks $H_*=(h_1,\dots, h_t)$, $h_i \in \{1,x,y,xy\}$ such that: if $\lambda_i = \lambda_{i+1}$ then $h_i \succeq h_{i+1}$.
\end{definition}

Let $C_k$ be the cyclic group of order $k$.
For any partition $L_* \vdash n$ define $C_{L_*}$ as the product of the cyclic groups $C_{\lambda_i}$ for $i=1,\dots, t$.
It acts on $\{1, \dots, n\}$ in the natural way.
Consider a marked partition $(L_*,H_*)$ and define $N_{L_*,H_*}$ as the group that permutes the blocks of $L_*$ with the same labels.
The group $N_{L_*,H_*}$ is a product of symmetric groups.
Call $Z_{L_*,H_*}$ the semidirect product $C_{L_*}\rtimes N_{L_*,H_*}$.

\begin{example}\label{example:label_part}
Let $(L_*,H_*)$ be the marked partition $L_*=(5,5,5,5,1,1,1)\vdash 23$ and $H_*=(xy,xy,xy,1,x,x,x)$.
The group $C_{L_*}\cong (\Z/5\Z)^{4} < \mathfrak{S}_{23}$ is generated by $(1,2,3,4,5),(6,7,8,9,10),(11,12,13,14,15)$, and $(16,17,18,19,20)$.
The subgroup $N_{L_*,H_*}\cong \mathfrak{S}_3 \times \mathfrak{S}_3$ is generated by the permutations $(1,6)(2,7)(3,8)(4,9)(5,10)$, $(1,11)(2,12)(3,13)(4,14)(5,15)$, $(21,22)$, and $(21,23)$.
Finally, $Z_{L_*,H_*}$ is a group isomorphic to $(\Z/5\Z \wr \mathfrak{S}_3) \times \Z/5\Z \times \mathfrak{S}_3$.
\end{example}

Given two representations $V,W$ of two groups $G$ and $H$ respectively, define the tensor representation $V\boxtimes W$ of $G\times H$ by the vector space $V\otimes W$ with the action $(g,h)(v\otimes w)= g(v) \otimes h(w)$.

We define the following one-dimensional representations.
Let $\varphi_n$ be a faithful character of the cyclic group and $\varphi_{L_*}$ the character of $C_{L_*}\cong \Z/\lambda_1\Z \times \cdots \times \Z/\lambda_t \Z$ given by
\[ \varphi_{L_*} \defeq \sgn_n|_{C_{L_*}}\cdot (\varphi_{\lambda_1} \boxtimes \dots \boxtimes \varphi_{\lambda_r}).\]

Recall that the degree $\deg$ of $1,x,y,xy$ are respectively $0,1,1,2$.
Let $\alpha_{L_*,H_*}$ be the one dimensional representation of $N_{L_*,H_*} \cong \mathfrak{S}_{\mu_1} \times \cdots \times \mathfrak{S}_{\mu_l}$ defined on generators by 
\[ \alpha_{L_*,H_*}(\sigma) \defeq 
(-1)^{\lambda+\deg(h)+1},\] 
where $\sigma$ is the permutation that exchange two blocks of size $\lambda$ and label $h$.
Set $\xi_{L_*,H_*}$ to be the one dimensional representation of $Z_{L_*,H_*}$ such that $\Res_{C_{L_*}}^{Z_{L_*,H_*}} \xi_{L_*,H_*}=\varphi_{L_*} $ and $ \Res_{N_{L_*,H_*}} ^{Z_{L_*,H_*}} \alpha_{L_*,H_*}$ .

We define $|L_*|=n-t$ for a partition $L_*=(\lambda_1, \dots, \lambda_t)$ of $n$ and for a mark $H_*$ the numbers $|H_*|=\sum_{i=1}^t \deg (h_i)$ and $\| H_*\| = |\{i \mid h_i=x\}|-|\{i \mid h_i=y\}|$.

\begin{thm}[{\cite[Theorem~3.15]{AAB14}}]\label{thm:weak_dec_E2}
There exist $\Sn$-representations $A_{L_*,H_*}\subset A^{p,q}$ such that 
 \[A^{\bigcdot,\bigcdot}=\bigoplus_{\substack{|L_*|=q \\ |H_*|=p}} A_{L_*,H_*} \] as $\Sn$-representation.
Moreover:
\[
 A_{L_*,H_*} \otimes_\Q \C \simeq  \Ind _{Z_{L_*,H_*}}^{\Sn} \xi_{L_*,H_*}.
\]
\end{thm}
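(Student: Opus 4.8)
The plan is to build an explicit $\Sn$-equivariant monomial decomposition of $A^{\bigcdot,\bigcdot}$, one block of a set partition at a time, and then to recognise each bidegree piece as an induced representation by the same mechanism that governs the top cohomology of braid arrangement complements.

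\emph{Normal form.} First I would reduce elements of $A^{\bigcdot,\bigcdot}$ to a standard shape. Using the three-term relation and $\omega_{i,j}^2=0$, every monomial in the $\omega$'s is a $\Q$-linear combination of ``no broken circuit'' forest monomials; the vertex sets of the connected components of such a forest form a set partition $\pi$ of $\{1,\dots,n\}$, and a tree on a block $B$ carries a degree-$(0,|B|-1)$ monomial which, as the tree varies, spans the top graded component $\mathrm{OS}^{|B|-1}(\mathcal{A}_B)$ of the Orlik--Solomon algebra of the braid arrangement on the coordinates of $B$. The relations $(x_i-x_j)\omega_{i,j}=(y_i-y_j)\omega_{i,j}=0$ identify $x_i$ with $x_j$ and $y_i$ with $y_j$ whenever $i,j$ lie in the same block, so each block retains only a single pair $x_B,y_B$, contributing a tensor factor $H^{\bigcdot}(E)$. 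A Hilbert series count (or the known structure of the Kri\v z model) rules out any further collapse, giving an $\Sn$-equivariant identification
\[
A^{\bigcdot,\bigcdot}\;\cong\;\bigoplus_{\pi}\ \bigotimes_{B\in\pi}\bigl(\mathrm{OS}^{|B|-1}(\mathcal{A}_B)\otimes_{\Q}H^{\bigcdot}(E)\bigr),
\]
in which $\Sn$ permutes set partitions, each $\mathfrak{S}_{|B|}$ acts on its factor through the Orlik--Solomon algebra only (trivially on the label $H^{\bigcdot}(E)$), and interchanging two blocks introduces the Koszul sign from moving a block of odd total degree past another.

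\emph{Grouping by marked partition and inducing.} Next I would fix a bidegree $(p,q)$, which amounts to fixing the multiset of pairs (block size, block label), i.e.\ a marked partition $(L_*,H_*)$ with $|L_*|=q$, $|H_*|=p$; then $A_{L_*,H_*}\subset A^{p,q}$ is the span of the summands of that type. Choosing a standard shape (blocks equal to consecutive intervals, standard trees, labels in the prescribed order) presents $A_{L_*,H_*}$ as $\Ind_{G_0}^{\Sn}M$ with $G_0=\bigl(\prod_a\mathfrak{S}_{\lambda_a}\bigr)\rtimes N_{L_*,H_*}$ the stabiliser of the shape, $M=\bigotimes_a\mathrm{OS}^{\lambda_a-1}(\mathcal{A}_{B_a})$, the factors $\mathfrak{S}_{\lambda_a}$ acting coordinatewise and $N_{L_*,H_*}$ permuting the factors while multiplying by the sign $(-1)^{(\lambda-1)+\deg h}=(-1)^{\lambda+\deg h+1}$ produced by moving a block of $\lambda-1$ odd $\omega$'s and a label of degree $\deg h$ past an equal block, which is exactly $\alpha_{L_*,H_*}$. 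Now I would invoke the Lehrer--Solomon description $\mathrm{OS}^{k-1}(\mathcal{A})\otimes\C\cong\Ind_{C_k}^{\mathfrak{S}_k}(\sgn|_{C_k}\cdot\varphi_k)$ with $\varphi_k$ a faithful character. Because induction commutes with the tensor product over the blocks and $N_{L_*,H_*}$ normalises $\prod_aC_{\lambda_a}=C_{L_*}$ inside $\prod_a\mathfrak{S}_{\lambda_a}$, transitivity of induction collapses $\Ind_{G_0}^{\Sn}M$ to $\Ind$ from $C_{L_*}\rtimes N_{L_*,H_*}=Z_{L_*,H_*}$ of the one-dimensional character whose restriction to $C_{L_*}$ is $\sgn_n|_{C_{L_*}}\cdot(\varphi_{\lambda_1}\boxtimes\cdots)=\varphi_{L_*}$ and whose restriction to $N_{L_*,H_*}$ is $\alpha_{L_*,H_*}$, i.e.\ $\xi_{L_*,H_*}$. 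After checking that $\varphi_{L_*}$ is $N_{L_*,H_*}$-invariant, so that $\xi_{L_*,H_*}$ is a genuine character of $Z_{L_*,H_*}$, this yields $A_{L_*,H_*}\otimes_{\Q}\C\cong\Ind_{Z_{L_*,H_*}}^{\Sn}\xi_{L_*,H_*}$.

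\emph{Main obstacle.} The hard part is the sign bookkeeping: confirming the twist $\sgn|_{C_k}\cdot\varphi_k$ for the cyclic stabiliser of the Orlik--Solomon generator on each block (the Lehrer--Solomon computation), and checking that reordering the $\omega$-blocks together with the degree-one labels $x_B,y_B$ in the exterior algebra contributes precisely $\alpha_{L_*,H_*}$ and not this sign twisted by an overall factor. Getting the two twists to combine into $\xi_{L_*,H_*}$ exactly is the delicate step, and it is also why the statement reads most cleanly after tensoring with $\C$, where the faithful characters $\varphi_{\lambda_a}$ genuinely are one-dimensional representations.
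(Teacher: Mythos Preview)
The paper does not prove this statement at all: it is quoted verbatim as \cite[Theorem~3.15]{AAB14} and used as a black box, so there is no ``paper's own proof'' to compare against. Your sketch is therefore not competing with anything in the present paper.

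That said, your outline is essentially the argument of \cite{AAB14} (building on Lehrer--Solomon): decompose the Kri\v z model along set partitions, identify each block's $\omega$-piece with the top Orlik--Solomon component of the braid arrangement, invoke $\mathrm{OS}^{k-1}(\mathcal{A}_k)\otimes\C\cong\Ind_{C_k}^{\mathfrak{S}_k}(\sgn\cdot\varphi_k)$, and then apply transitivity of induction through the wreath-type stabiliser to land on $\Ind_{Z_{L_*,H_*}}^{\Sn}\xi_{L_*,H_*}$. The identification of the Koszul sign on block permutations with $\alpha_{L_*,H_*}$ is correct, and your flagged ``main obstacle'' (sign bookkeeping and the well-definedness of $\xi_{L_*,H_*}$ on the semidirect product) is exactly where the work lies in \cite{AAB14}. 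As a proof plan this is sound; to make it a proof you would need to pin down the normal-form basis carefully enough to see that the block-interchange sign really is $(-1)^{\lambda+\deg h+1}$ and not an extra global twist, and to verify that the chosen faithful characters $\varphi_{\lambda_a}$ are compatible under $N_{L_*,H_*}$ so that $\xi_{L_*,H_*}$ extends.
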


\begin{example}
Consider the marked partition $(L_*,H_*)$ of \Cref{example:label_part}, the characters are shown in the following table.
\[ \begin{tabular}{c|c|c|c|c}
 & $(1,2,3,4,5)$ & $(16,17,18,19,20)$ & $(1,6)(2,7)(3,8)(4,9)(5,10)$ & $(21,22)$\\ 
$\varphi$ & $\zeta_5$ &  $\zeta_5$ & & \\ 
\hline 
$\alpha$ & & & $1$ & $-1$ \\ 
\hline 
$\xi$ & $\zeta_5$ & $\zeta_5$ & $1$ & $-1$ \\ 
\end{tabular}\]
\end{example}

\subsection{Decomposition into \texorpdfstring{$\Sn \times \SL(\Q)$}{Sn x SL2(Q)}-representations} \label{subsect:action_SL2}

Let $T=\{H_t\} \cong \Q^* $ be the maximal torus in $\SL (\Q)$ generated by the diagonal matrices $H_t=\left(
\begin{smallmatrix}
t & 0 \\
0 & t^{-1}
\end{smallmatrix}\right)$.
Let $\V_1$ be the irreducible representation $\Q^2$ with the standard action of matrix-vector multiplication and let $\V_k= S^k \V_1$ be the irreducible representation given by the symmetric power of $\V_1$.
The representation $\V_k$ has dimension $k+1$ and can be view as $\Q[x,y]_k$, \ie \ the vector space of homogeneous polynomials in two variables.
The action of $T$ on the monomials is given by $H_t \cdot x^ay^{k-a} = t^{2a-k} x^ay^{k-a}$, thus $\V_k$ decomposes, as representations of $T$
\begin{equation} \label{eq:spiegazione_per_C}
\V_k= \bigoplus_{a=0}^k V(2a-k),
\end{equation}
where $V(2a-k)$ is the subspace where $H_t$ acts with character $t^{2a-k}$, \ie \ the subspace generated by $x^ay^{k-a}$.
Since the group $\SL (\Q)$ is dense in $\SL (\C)$, each irreducible regular representation of $\SL (\Q)$ is isomorphic to $\V_k$ for some $k\in \N$.
For a proof see \cite[Proposition 2.3.5]{GWbook} and use a density reasoning.

As a consequence we can decompose a representation $V$ of $\SL (\Q)$ using its decomposition $V= \oplus_{a \in \Z} V(a)^{\oplus n_a}$ as representation of $T$: indeed $V\cong \oplus_{k \in \N} \V_k^{\oplus m_k}$ as representation of $\SL (\Q)$, where $m_k=n_k-n_{k+2}$.
By setting $V=\V_m \otimes \V_n$, we obtain the following formula for $n\leq m$:
\[ \V_m \otimes \V_n \cong \V_{m+n} \oplus \V_{m+n-2} \oplus \dots \oplus \V_{m-n}. \]

As observed in \Cref{subsec:def_actions}, the group $\SL(\Q)$ acts trivially on $\omega_{i,j}$ for all $1\leq i<j\leq n$ and the two dimensional subspace generated by $x_i$ and $y_i$ is isomorphic to $\V_1$ as representation of $\SL(\Q)$.

We will use the decomposition of \Cref{thm:weak_dec_E2} to obtain a decomposition of $A^{\bigcdot,\bigcdot}$ into $\Sn \times \SL(\Q)$-modules.
Let 
\[A^{p,q}_a= \bigoplus_{\substack{|L_*|= q \\ |H_*|=p, \;  \|H_*\|=a}} A_{L_*,H_*}\]
be a $\Sn \times T$-stable subspace of $A^{p,q}$ and hence we have $A^{p,q}= \oplus_{a=-p}^{p} A^{p,q}_a$.
Let $p_a\co A^{p,q} \to A^{p,q}_a$ be the $\Sn \times T$-equivariant projection and define $Y=\left(
\begin{smallmatrix}
1 & 0 \\
1 & 1
\end{smallmatrix}\right) \in \SL(\Q)$ and, for $a\geq 0$, define $\pi_a \co A^{p,q}_{a+2} \to A^{p,q}_a$ by $v \mapsto p_a(Y\cdot v)$.

Notice that, $\pi_a$ is a morphism of $\Sn$-representations and that $A^{p,q}_a$ is zero if $a \not \equiv p \mod 2$, or if $a>p$, or if $a > n-q$.

\begin{lemma}
The map $\pi_a$ is injective.
\end{lemma}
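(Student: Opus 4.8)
\emph{Proof plan.} The idea is to identify $\pi_a$ with the action of the lowering operator of $\slC$ on the weight space $A^{p,q}_{a+2}$ and then reduce injectivity to standard $\slC$--representation theory. First I would record two preliminary observations. (i) $A^{p,q}$ is a finite--dimensional rational representation of $\SL(\Q)$, so by the discussion in \Cref{subsect:action_SL2} it decomposes as $A^{p,q}\cong\bigoplus_{k\in\N}\V_k^{\oplus m_k}$; and $A^{p,q}_a$ is precisely the subspace on which the torus element $H_t$ acts by $t^{a}$. Indeed $H_t\cdot x_i=t\,x_i$, $H_t\cdot y_i=t^{-1}y_i$ and $H_t\cdot\omega_{i,j}=\omega_{i,j}$, so any monomial lying in $A_{L_*,H_*}$ is an $H_t$--eigenvector of weight $|\{i\mid h_i=x\}|-|\{i\mid h_i=y\}|=\|H_*\|$; thus $p_a$ is the projection onto the $t^a$--weight space. (ii) $Y=\left(\begin{smallmatrix}1&0\\1&1\end{smallmatrix}\right)=\exp\!\left(\begin{smallmatrix}0&0\\1&0\end{smallmatrix}\right)$, since $\left(\begin{smallmatrix}0&0\\1&0\end{smallmatrix}\right)^2=0$; write $F=\left(\begin{smallmatrix}0&0\\1&0\end{smallmatrix}\right)\in\slC$, and note $[H,F]=-2F$, so $F$ lowers $H_t$--weights by $2$.

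The key step is the identity $\pi_a=F|_{A^{p,q}_{a+2}}$. Because the $\SL(\Q)$--action is rational, $Y$ acts on $A^{\bigcdot,\bigcdot}$ as $\exp F=\sum_{m\ge0}\tfrac{1}{m!}F^m$ (a finite sum on each $\V_k$). For $v\in A^{p,q}_{a+2}$ the term $\tfrac{1}{m!}F^m v$ lies in $A^{p,q}_{a+2-2m}$, and these are pairwise distinct weight spaces, so applying $p_a$ annihilates every term except the one with $m=1$. Hence $\pi_a(v)=p_a(Y\cdot v)=F\cdot v$, as claimed.

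Injectivity of $\pi_a$ for $a\ge0$ then follows immediately: on each irreducible summand $\V_k$ the kernel of $F$ is the lowest--weight line, which has $H_t$--weight $-k\le0$, so $\ker F\subseteq\bigoplus_{b\le0}A^{p,q}_b$; since $a\ge0$ forces $a+2\ge2>0$, we get $A^{p,q}_{a+2}\cap\ker F=0$, i.e.\ $\pi_a$ is injective. (Alternatively, one can bypass the exponential by computing in the model $\V_k=\Q[x,y]_k$: there $Y\cdot(x^{k-j}y^j)=(x+y)^{k-j}y^j=x^{k-j}y^j+(k-j)\,x^{k-j-1}y^{j+1}+\cdots$, whose weight--$(k-2j-2)$ component has the nonzero coefficient $k-j$ whenever $j\le k-1$; and $j\le k-1$ holds for every $j\ge0$ with $k-2j-2=a\ge0$, since then $j=\tfrac{k-a-2}{2}\le\tfrac{k-2}{2}<k$.)

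The only place that needs a little care is the first observation — pinning down that $A^{p,q}_a$ is exactly the $H_t$--weight space $t^a$ and hence that $\pi_a$ coincides with a single $\slC$--operator restricted to a weight space. Once that reduction is in place there is no genuine obstacle: the statement is a direct consequence of the structure theory of finite--dimensional $\slC$--modules.
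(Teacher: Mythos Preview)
Your proof is correct. The alternative computation you sketch in parentheses at the end is exactly the paper's argument: it reduces to each irreducible $\V_k\subseteq A^{p,q}$, writes the weight--$(a+2)$ vector as $x^{a+b+2}y^b$ (your $x^{k-j}y^j$ with $j=b$, $k=a+2b+2$), expands $(x+y)^{a+b+2}y^b$, and reads off the weight--$a$ coefficient $a+b+2\neq 0$.

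Your primary route, however, is genuinely different. You identify $\pi_a$ with the lowering operator $F\in\slC$ restricted to a positive weight space, using $Y=\exp F$ and the fact that $p_a$ picks out the $m=1$ term of $\sum_m\tfrac{1}{m!}F^m v$. Injectivity is then immediate from the structure of irreducible $\slC$--modules: $\ker F$ sits in nonpositive weight, while $a+2\ge 2$. This is cleaner and more conceptual than the paper's binomial--coefficient check, and it explains \emph{why} the computation works rather than just verifying it. The small price is that you invoke the Lie algebra action (implicitly passing to $\C$, or at least differentiating the rational $\SL(\Q)$--action), whereas the paper stays with the group action over~$\Q$ throughout; but this is harmless since injectivity of a $\Q$--linear map can be checked after base change.
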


\begin{proof}
If $V\subseteq A^{p,q}$ is a $SL(\Q)$-representation, then $p_a(V) \subseteq V$, thus it is enough to prove that $\pi_a \co V(a+2) \to V(a)$ is injective for all irreducible representation $\V_k \subseteq A^{p,q}$.
If $k\not \equiv a \mod 2$ or if $k<a+2$ then $V(a+2)=0$.
Otherwise $k=a+2b+2$ for some $b\geq 0$ and $V(a+2)$ is a one dimensional vector space generated by the homogeneous monomial $x^{a+b+2}y^{b}$.
The projection $p_a$ has kernel equal to $\oplus_{l \neq a} V(l)$, thus
\[ p_a(Y \cdot x^{a+b+2}y^{b})= p_a ((x+y)^{a+b+2}y^{b})=(a+b+2) x^{a+b+1} y^{b+1}. \]
This last term is non-zero since $a,b \geq 0$ and therefore $\pi_a$ is injective.
\end{proof}

\begin{thm}\label{thm:strong_dec_E2}
The algebra $A^{\bigcdot,\bigcdot}$ decomposes as $\Sn \times \SL(\Q)$-representation in the following way:
\begin{equation}
A^{p,q} \cong \bigoplus_{a=0}^p \operatorname{coker} \pi_a \boxtimes \V_a.
\end{equation}
\end{thm}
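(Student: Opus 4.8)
The plan is to deduce the theorem from the preceding lemma together with the representation theory of $\SL(\Q)$ recalled in \Cref{subsect:action_SL2}. Since the $\Sn$-- and $\SL(\Q)$--actions on $A^{p,q}$ commute and every irreducible regular $\SL(\Q)$--representation is some $\V_k$, complete reducibility gives a decomposition
\[ A^{p,q} \;\cong\; \bigoplus_{k\geq 0} M_k \boxtimes \V_k, \qquad M_k \defeq \Hom_{\SL(\Q)}(\V_k, A^{p,q}), \]
in which each multiplicity space $M_k$ is an $\Sn$--representation and the evaluation maps $M_k\otimes\V_k\to A^{p,q}$ are $\Sn\times\SL(\Q)$--equivariant. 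It therefore suffices to identify $M_a$ with $\operatorname{coker}\pi_a$ as $\Sn$--representations for $0\leq a\leq p$, and to check that $M_k=0$ for $k>p$. The latter is immediate: since $\|H_*\|$ is the difference between the number of $x$--marks and $y$--marks one has $-p\leq\|H_*\|\leq p$, so every $T$--weight occurring in $A^{p,q}$ lies in $\{-p,\dots,p\}$ and no $\V_k$ with $k>p$ can appear.

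The next step is to match the $T$--weight decomposition of $A^{p,q}$ with the subspaces $A^{p,q}_a$. From the description of $A_{L_*,H_*}$ in \cite{AAB14} one sees that $T$ acts on it through the character $H_t\mapsto t^{\|H_*\|}$: a spanning vector is a product of classes $\omega_{i,j}$ (which are $T$--invariant) decorated block by block by a factor that is $T$--invariant (for the marks $1$ and $xy$) or a weight $\pm 1$ vector of $A^{1,0}$ (for the marks $x$ and $y$), and $\|H_*\|$ is precisely the net number of weight--$(+1)$ decorations. Hence $A^{p,q}_a$ is exactly the $t^a$--weight space of $A^{p,q}$ (this is compatible with the $\Sn$--action since $\Sn$ commutes with $T$), and combining with the decomposition above,
\[ A^{p,q}_a \;=\; \bigoplus_{b\geq 0} M_{a+2b}\otimes\V_{a+2b}(a)\qquad(a\geq 0), \]
where each weight line $\V_{a+2b}(a)$ is a trivial $\Sn$--module; thus $A^{p,q}_a\cong\bigoplus_{b\geq 0}M_{a+2b}$ as $\Sn$--representations.

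Finally I would identify $\pi_a$ with a weight--lowering operator. Writing $f=\left(\begin{smallmatrix}0&0\\1&0\end{smallmatrix}\right)$ one has $Y=\exp f$, so for $v\in A^{p,q}_{a+2}$ the vector $Y\cdot v=v+f\cdot v+\tfrac12 f^2\cdot v+\cdots$ has its homogeneous components in the weights $a+2,a,a-2,\dots$; therefore $\pi_a(v)=p_a(Y\cdot v)=f\cdot v$. Thus $\pi_a$ is the restriction of the Lie algebra operator $f$, it commutes with $\Sn$, and on the summand $M_{a+2b}\otimes\V_{a+2b}(a+2)$ it acts as $\operatorname{id}_{M_{a+2b}}$ tensored with the map $\V_{a+2b}(a+2)\to\V_{a+2b}(a)$, which—exactly as in the computation in the proof of the preceding lemma—multiplies the relevant monomial by a nonzero scalar and hence is an isomorphism of lines for every $b\geq 1$. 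Consequently $\operatorname{im}\pi_a=\bigoplus_{b\geq 1}M_{a+2b}\otimes\V_{a+2b}(a)$, which under the identification above is the sub-$\Sn$-module $\bigoplus_{b\geq 1}M_{a+2b}$, complementary in $A^{p,q}_a\cong\bigoplus_{b\geq 0}M_{a+2b}$ to the $b=0$ term. Hence $\operatorname{coker}\pi_a\cong M_a$ as $\Sn$--representations, and substituting into the decomposition of $A^{p,q}$ yields $A^{p,q}\cong\bigoplus_{a=0}^{p}\operatorname{coker}\pi_a\boxtimes\V_a$.

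The only real hurdle I anticipate is the bookkeeping that makes the identification $A^{p,q}_a\cong\bigoplus_{b\geq 0}M_{a+2b}$ genuinely $\Sn$--equivariant and then tracks it through the image of $\pi_a$; the rest is the $\mathfrak{sl}_2$--yoga already set up in \Cref{subsect:action_SL2}, with the crucial nonvanishing input being precisely the scalar computed in the proof of the preceding lemma. It is worth being careful that the weight lines $\V_k(a)$ carry the trivial $\Sn$--action, since this is what allows one to suppress those tensor factors when passing between $T$--weight spaces and $\Sn$--modules.
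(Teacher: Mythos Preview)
Your argument is correct and rests on the same key observation as the paper's proof: that $A^{p,q}_a$ is precisely the $T$-weight-$a$ subspace of $A^{p,q}$, after which everything follows from the $\mathfrak{sl}_2$-yoga of \Cref{subsect:action_SL2}. The execution differs slightly. The paper does not introduce the multiplicity spaces $M_k=\Hom_{\SL(\Q)}(\V_k,A^{p,q})$ nor identify $\pi_a$ with the lowering operator; instead it argues in the Grothendieck ring: by the injectivity of $\pi_a$ one has $[\operatorname{coker}\pi_a]=[A^{p,q}_a]-[A^{p,q}_{a+2}]$ as $\Sn$-modules, so a telescoping sum together with eq.~\eqref{eq:spiegazione_per_C} shows that $\bigoplus_{a}\operatorname{coker}\pi_a\boxtimes\V_a$ and $A^{p,q}$ have the same $\Sn\times T$-character, and hence are isomorphic as $\Sn\times\SL(\Q)$-representations. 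Your route via $\pi_a=f|_{A^{p,q}_{a+2}}$ is a little more structural---it exhibits $\operatorname{coker}\pi_a$ canonically as the highest-weight multiplicity space $M_a$ rather than merely matching classes---at the cost of invoking the Lie algebra action over $\Q$; the paper's character comparison is terser and sidesteps that point entirely.
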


\begin{proof}
Observe that the maximal torus $T$ of $\SL(\Q)$ acts on $A^{p,q}_a$ with character $t^a$, thus by \Cref{thm:weak_dec_E2} we have
\[ A^{p,q} = \bigoplus_{a=-p}^p A^{p,q}_a\]
as $\Sn \times T$ representations.
By using eq~\eqref{eq:spiegazione_per_C} we obtain that $\bigoplus_{a=0}^p \operatorname{coker} \pi_a \boxtimes \V_a$ is isomorphic to $\bigoplus_{a=-p}^p A^{p,q}_a$ as representation of $\Sn \times T$.
The representation theory of $\SL(\Q)$ ensure that the representations $A^{p,q}$ and $\bigoplus_{a=0}^p \operatorname{coker} \pi_a \boxtimes \V_a$ are isomorphic as $\Sn \times \SL(\Q)$-representations.
\end{proof}

Define the $\Sn$-invariant subalgebra of $A^{\bigcdot,\bigcdot}$ by $\UA^{\bigcdot,\bigcdot}$ and of $B^{\bigcdot,\bigcdot}$ by $\UB^{\bigcdot,\bigcdot}$.
Obviously we have $\UA^{\bigcdot,\bigcdot}=\UB^{\bigcdot,\bigcdot} \otimes_\Q D^\bigcdot$.
We use the previous calculation to compute $\UA^{\bigcdot,\bigcdot}$
\begin{cor}
For $q>p+1$ we have $\UA^{p,q}=0$.
\end{cor}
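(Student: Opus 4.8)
The plan is to use \Cref{thm:weak_dec_E2} together with Frobenius reciprocity to turn the vanishing into a counting problem about marked partitions, and then to decide exactly when the one-dimensional representation $\xi_{L_*,H_*}$ is trivial.

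First I would note that $\UA^{p,q}=(A^{p,q})^{\Sn}$ by definition, and that extension of scalars commutes with taking invariants. So by \Cref{thm:weak_dec_E2} and Frobenius reciprocity,
\[
\dim_{\Q}\UA^{p,q}=\sum_{\substack{|L_*|=q\\ |H_*|=p}}\dim_{\C}\bigl(\Ind_{Z_{L_*,H_*}}^{\Sn}\xi_{L_*,H_*}\bigr)^{\Sn}
=\#\{(L_*,H_*)\mid |L_*|=q,\ |H_*|=p,\ \xi_{L_*,H_*}\cong\mathbf 1\},
\]
since a one-dimensional representation has nonzero invariants exactly when it is trivial. Hence it is enough to prove that a marked partition $(L_*,H_*)$ with $\xi_{L_*,H_*}$ trivial must satisfy $|L_*|\le|H_*|+1$.

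Next I would analyse the two restrictions that determine $\xi_{L_*,H_*}$. Triviality of $\Res_{C_{L_*}}\xi_{L_*,H_*}=\varphi_{L_*}$ forces triviality on each factor $C_{\lambda_i}$; there the standard generator is a $\lambda_i$-cycle, on which $\varphi_{L_*}$ takes the value $(-1)^{\lambda_i-1}\zeta_{\lambda_i}$ with $\zeta_{\lambda_i}$ a primitive $\lambda_i$-th root of unity, so we need $\zeta_{\lambda_i}=(-1)^{\lambda_i-1}$, which forces $\lambda_i\in\{1,2\}$. Writing $a$ and $b$ for the number of parts equal to $2$ and to $1$, we get $n=2a+b$, $t=a+b$, hence $q=|L_*|=n-t=a$. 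Triviality of $\Res_{N_{L_*,H_*}}\xi_{L_*,H_*}=\alpha_{L_*,H_*}$ means $(-1)^{\lambda+\deg h+1}=1$ on every transposition exchanging two blocks of common size $\lambda$ and common label $h$; in particular two distinct parts of size $2$ cannot both be labelled $1$ (for which $\lambda+\deg h+1=3$), so at most one size-$2$ part carries the label $1$. (The same computation restricts the other label multiplicities, but only the bound $a_1\le1$ on the number $a_1$ of size-$2$ parts labelled $1$ is needed.)

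Finally I would do the degree bookkeeping. Let $a_x,a_y,a_{xy}$ and $b_x,b_y,b_{xy}$ count the size-$2$ and the size-$1$ parts with the indicated label; then $q=a=a_1+a_x+a_y+a_{xy}$, and since $\deg$ of $1,x,y,xy$ equals $0,1,1,2$,
\[
p=|H_*|=a_x+a_y+2a_{xy}+b_x+b_y+2b_{xy}.
\]
Subtracting gives $q-p=a_1-a_{xy}-b_x-b_y-2b_{xy}\le a_1\le 1$, so $\xi_{L_*,H_*}$ trivial implies $q\le p+1$; equivalently $\UA^{p,q}=0$ whenever $q>p+1$. I expect the only real obstacle to be the characterisation of when $\xi_{L_*,H_*}$ is trivial—in particular the observation that a faithful character of $C_k$ twisted by the sign character can be trivial only for $k\le 2$, and that triviality of $\alpha_{L_*,H_*}$ forces $a_1\le 1$; once these are in hand the conclusion is a one-line count of degrees.
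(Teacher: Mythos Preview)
Your proposal is correct and follows the same route as the paper: reduce via \Cref{thm:weak_dec_E2} and Frobenius reciprocity to the question of when $\xi_{L_*,H_*}$ is trivial, use triviality of $\varphi_{L_*}$ to force all parts to have size $1$ or $2$, and use triviality of $\alpha_{L_*,H_*}$ to bound the number of size-$2$ parts with degree-$0$ label. Your final degree bookkeeping (expressing $q-p$ in terms of the label multiplicities and reading off $q-p\le a_1\le 1$) is in fact a cleaner and more transparent way to extract the inequality than the paper's somewhat terse conclusion.
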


\begin{proof}
Let $\mathds{1}_n$ be the trivial representation of $\Sn$.
We use \Cref{thm:weak_dec_E2} to show that
\[ \langle \mathds{1}_n, A^{p,q} \rangle_{\Sn}= 0 \]
for $q>p+1$.
Indeed, it is enough to prove that 
\[ \langle \mathds{1}_n, \Ind _{Z_{L_*,H_*}} ^{\Sn} \xi _{L_*,H_*} \rangle_{\Sn}= 0 \]
for all $(L_*,H_*)$ with $|L_*|= q$ and $|H_*|=p$.
By Frobenius reciprocity we have 
\[ \langle \mathds{1}_n, \Ind _{Z_{L_*,H_*}} ^{\Sn} \xi _{L_*,H_*} \rangle_{\Sn}= \langle \Res _{Z_{L_*,H_*}} ^{\Sn} \mathds{1}_n, \xi _{L_*,H_*} \rangle_{Z_{L_*,H_*}}\]
Since the representations in the right hand side are one-dimensional the value of $\langle \mathds{1}_n, \Ind _{Z_{L_*,H_*}} ^{\Sn} \xi _{L_*,H_*} \rangle_{\Sn}$ is non zero if and only if $\xi _{L_*,H_*}=\mathds{1}$.

By definition $\xi _{L_*,H_*}=\mathds{1}$ is equivalent to $\varphi _{L_*}=\mathds{1}$ and $\alpha _{L_*,H_*}=\mathds{1}$.
From the fact that $\varphi_k=\sgn_k$ only for $k=1,2$, $\psi _{L_*}=\mathds{1}$ 
if and only if
$\lambda_i=1,2$ for all $i=1,\dots,t$.
The condition $\alpha _{L_*,H_*}=\mathds{1}$ implies that the only marked blocks of $(L_*,H_*)$ that appear more than once are the ones with $\lambda_i=2$ and $\deg(h_i)=1$ or the ones with $\lambda_i=1$ and $\deg(h_i) \neq 1$.

Consequently, $\langle \mathds{1}_n, \Ind _{Z_{L_*,H_*}} ^{\Sn} \xi _{L_*,H_*} \rangle_{\Sn} \neq 0$ only if $L_*=(2^q,1^{n-2q})$ and the degree of $h_i$ is $1$ for $i<q$, this implies $p\geq q-1$ contrary to our hypothesis.
\end{proof}

\begin{cor}\label{cor:q<p+1}
For $q>p+1$ we have $\UB^{p,q}=0$.\hfill \qedsymbol
\end{cor}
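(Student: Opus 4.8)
The plan is to reduce the statement immediately to the preceding corollary by means of the tensor splitting $\UA^{\bigcdot,\bigcdot}=\UB^{\bigcdot,\bigcdot}\otimes_\Q D^{\bigcdot,0}$. The first step is to record the elementary structure of the second factor: since $D^{\bigcdot,0}$ is the subalgebra generated by the two classes $\gamma,\overline{\gamma}\in A^{1,0}$ inside the exterior quotient $A^{\bigcdot,\bigcdot}$, it is concentrated in second degree $0$, namely $D^{0,0}=\Q$, $D^{1,0}=\Q\gamma\oplus\Q\overline{\gamma}$ and $D^{2,0}=\Q\,\gamma\overline{\gamma}$, with all other bigraded pieces vanishing. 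In particular $D^{0,0}=\Q$ is a direct summand of $D^{\bigcdot,0}$ as a bigraded vector space.

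Next I would unpack the tensor decomposition in bidegrees. Because $D^{\bigcdot,0}$ lives only in second degree $0$, we get
\[ \UA^{p,q}=\bigoplus_{p_1+p_2=p}\UB^{p_1,q}\otimes_\Q D^{p_2,0}, \]
and the summand indexed by $(p_1,p_2)=(p,0)$ is $\UB^{p,q}\otimes_\Q D^{0,0}\cong\UB^{p,q}$. Hence $\UB^{p,q}$ is a direct summand of $\UA^{p,q}$. The only point that deserves an explicit word is that the isomorphism $A^{\bigcdot,\bigcdot}\cong B^{\bigcdot,\bigcdot}\otimes_\Q D^{\bigcdot,0}$ is $\Sn$-equivariant with $\Sn$ acting trivially on $D^{\bigcdot,0}$ (the generators $\gamma$ and $\overline{\gamma}$ being symmetric), so that passing to $\Sn$-invariants is compatible with the product decomposition; but this is precisely the already-stated identity $\UA^{\bigcdot,\bigcdot}=\UB^{\bigcdot,\bigcdot}\otimes_\Q D^{\bigcdot}$, so nothing new has to be proved.

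Finally, I would invoke the previous corollary: for $q>p+1$ one has $\UA^{p,q}=0$, and a direct summand of the zero space is zero, whence $\UB^{p,q}=0$. There is no genuine obstacle here — the mathematical content was already carried by the $\UA^{p,q}$ statement combined with the tensor factorisation. (Alternatively, one could rerun the Frobenius-reciprocity computation of the previous proof directly on the $\Sn$-decomposition of $B^{\bigcdot,\bigcdot}$, but routing through $\UA^{\bigcdot,\bigcdot}$ is shorter and avoids repeating the argument.)
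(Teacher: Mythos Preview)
Your argument is correct and is exactly the approach the paper intends: the corollary is stated with only a \qedsymbol\ immediately after the $\UA^{p,q}=0$ result, relying on the already-recorded splitting $\UA^{\bigcdot,\bigcdot}=\UB^{\bigcdot,\bigcdot}\otimes_\Q D^{\bigcdot}$ to conclude. You have simply spelled out the one-line deduction the paper leaves implicit.
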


\section{The additive structure of the cohomology} \label{section3}

We compute the cohomology with rational coefficients of the unordered configuration spaces of $n$ points, taking care of the mixed Hodge structure and of the action of $\SL(\Q)$.
The integral cohomology groups are known only for small $n$ in \cite[Table 2]{Napolitano}, where a cellular decomposition of ordered configuration spaces is given.
In this section, we use the calculation of the Betti numbers of $\Uconfn$ to determine the Hodge polynomial in the Grothendieck ring of $\SL(\Q)$.

Observe that $H^\bigcdot(\Uconfn)= H^\bigcdot (\Confn) ^{\Sn}$ by the Transfer Theorem.
Define the series
\[T(u,v)=\frac{1+u^3v^4}{(1-u^2v^3)^2}= 1+2u^2v^3+ u^3v^4+ 3u^4v^6+ 2u^5v^7+ \dots\]
and let $T_n(u,v)$ be its truncation at degree $n$ in the variable $u$.

The computation of the Betti numbers of unordered configuration space of $n$ points in an elliptic curve was done simultaneously by \cite{DCK2017}, \cite{Maguire2016}, and \cite{Schiessl16} in different generality.
We point to the last reference because \cite[Theorem]{Schiessl16} fits exactly our generality.

\begin{thm}\label{thm:Poin_Cn}
The Poincaré polynomial of $\Uconfn$ is $(1+t)^2T_{n-1}(t,1)$.
\end{thm}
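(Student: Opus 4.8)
The plan is to deduce the formula from the Betti number computations of \cite{DCK2017,Maguire2016,Schiessl16}, after two reduction steps: first separating off the cohomology of $E$ coming from the splitting \eqref{eq:iso}, and then quoting the cited computation for the remaining factor.

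First I would pass from $\Uconfn$ to $\Mn$. By the Transfer Theorem $H^\bigcdot(\Uconfn;\Q)=H^\bigcdot(\Confn;\Q)^{\Sn}$. The isomorphism \eqref{eq:iso} is one of $\Sn$-equivariant differential graded algebras: the subalgebras $B^{\bigcdot,\bigcdot}$ and $D^{\bigcdot,0}$ are both $\Sn$-stable, and since $\gamma=\sum_i x_i$ and $\overline\gamma=\sum_i y_i$ are symmetric the action on $D^{\bigcdot,0}=H^\bigcdot(E)$ is trivial. Taking cohomology and then $\Sn$-invariants gives
\[ H^\bigcdot(\Uconfn)\cong H^\bigcdot(B^{\bigcdot,\bigcdot})^{\Sn}\otimes_\Q H^\bigcdot(E). \]
Since the Poincaré polynomial of $E$ is $(1+t)^2$, it suffices to show that the Poincaré polynomial of $H^\bigcdot(B^{\bigcdot,\bigcdot})^{\Sn}$ — equivalently, of $\Mn/\Sn$ — equals $T_{n-1}(t,1)$.

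Next I would match this with the literature. The references above compute the (bigraded) Poincaré series of the unordered configuration spaces of the torus; rewriting that rational function in closed form produces exactly $T(u,v)=\dfrac{1+u^3v^4}{(1-u^2v^3)^2}$, with $u$ recording cohomological degree and $v$ the weight. It then remains to check that $H^k(\Mn/\Sn)$ vanishes for $k\geq n$ and, for $k\leq n-1$, is independent of $n$ and equal to the coefficient of $u^k$ in $T(u,1)$; therefore $P(\Mn/\Sn;t)$ is precisely the truncation $T_{n-1}(t,1)$, and specializing $v=1$ concludes. Equivalently, the closed form above is the content of the generating-function identity
\[ \sum_{n\geq 1}P(\Uconfn;t)\,s^n=\frac{(1+t)^2\,s\,(1+s^3t^3)}{(1-s)\,(1-s^2t^2)^2}, \]
which one checks against the cited computations.

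The main difficulty here will be purely bookkeeping rather than conceptual: reconciling the degree and weight conventions of \cite{Schiessl16} (and of \cite{DCK2017,Maguire2016}) with ours, and pinning down the stabilization range — that the cohomology of $\Mn/\Sn$ is concentrated in degrees $\leq n-1$ and agrees there with the stable value — so that the truncation index comes out to be exactly $n-1$.
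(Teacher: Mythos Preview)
The paper does not prove this statement: it is quoted as a known result, with a pointer to \cite[Theorem]{Schiessl16} (and the parallel computations in \cite{DCK2017,Maguire2016}). So there is no ``paper's own proof'' to compare against; the only work the paper does here is to repackage the cited Betti numbers into the form $(1+t)^2T_{n-1}(t,1)$.

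Your proposal is therefore more than the paper offers, but the route you take is roundabout. The reduction to $\Mn/\Sn$ via the splitting \eqref{eq:iso} is unnecessary: the cited references compute the Betti numbers of $\Uconfn$ itself, not of $\Mn/\Sn$, so passing to $\Mn$ and then invoking the literature for the \emph{remaining} factor is circular --- you would be deducing the Poincar\'e polynomial of $\Mn/\Sn$ from that of $\Uconfn$, not the other way around. The only genuine content is your final step: matching the cited generating function to the closed form
\[
\sum_{n\geq 1}P(\Uconfn;t)\,s^n=\frac{(1+t)^2\,s\,(1+s^3t^3)}{(1-s)\,(1-s^2t^2)^2},
\]
and reading off that the coefficient of $s^n$ is $(1+t)^2T_{n-1}(t,1)$. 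That identity is correct and is all that is needed; the $\Mn$ detour and the separate ``stabilization range'' check can be dropped.
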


We use the notation $V u^k v^h$ to denote a vector space $V$ in degree $k$ with a Hodge structure of weight $h$.
The Grothendieck ring of $\SL(\Q)$ is the free $\Z$-module with basis given by $[V]$ for all finite-dimensional irreducible representations $V$ of $\SL(\Q)$ and product defined by the tensor product of representations.
\begin{definition}
The Hodge polynomial of $\Uconfn$ with coefficients in the Grothendieck ring of $\SL(\Q)$ is 
\[ \sum_{i=0}^{2n} \sum_{k=i}^{2i} \Bigg[ \faktor{W_k H^i(\Uconfn;\Q)}{W_{k-1} H^i(\Uconfn;\Q)} \Bigg] u^i v^k, \]
where $W_k H^i(\Uconfn;\Q)$ is the weight filtration on $H^i(\Uconfn;\Q)$.
The ordinary Hodge polynomial is
\[ \sum_{i=0}^{2n} \sum_{k=i}^{2i} \dim_\Q \Bigg( \faktor{W_k H^i(\Uconfn;\Q)}{W_{k-1} H^i(\Uconfn;\Q)} \Bigg) u^i v^k. \]
\end{definition}

We prove a stronger version of \Cref{thm:Poin_Cn}.
\begin{thm}\label{thm:MHP_Groth_ring}
The Hodge polynomial of $\Uconfn$ with coefficients in the Grothen\-dieck ring of $\SL(\Q)$ is
\begin{equation}
\label{eq:MH poly in G ring}
([\V_0]+ [\V_1] uv+ [\V_0] u^2v^2)\left( \sum_{i=0}^{\lfloor \frac{n-1}{2} \rfloor }  [\V_i] u^{2i}v^{3i}  + \sum_{i=1}^{\lfloor \frac{n}{2} \rfloor -1 }  [\V_{i-1}] u^{2i+1}v^{3i+1} \right)
\end{equation}
and the ordinary Hodge polynomial is $(1+uv)^2 T_{n-1}(u,v)$.
\end{thm}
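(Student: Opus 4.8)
The plan is to peel off the topologically inessential translation factor, reduce to the invariant model $\UB^{\bigcdot,\bigcdot}$, compute an Euler characteristic in the Grothendieck ring of $\SL(\Q)$, and then upgrade it to a computation of cohomology using the Betti numbers of \Cref{thm:Poin_Cn}. For the reduction, the Transfer Theorem and the theorem at the end of \Cref{sect:model} give $H^\bigcdot(\Uconfn;\Q)\cong H^\bigcdot(A^{\bigcdot,\bigcdot})^{\Sn}$, compatibly with the mixed Hodge structure and with the $\SL(\Q)$-action (the latter because the action on the model is the geometric one of \Cref{subsec:def_actions}). Under \eqref{eq:iso}, $A^{\bigcdot,\bigcdot}\cong B^{\bigcdot,\bigcdot}\otimes_\Q D^{\bigcdot,0}$ as $\Sn\times\SL(\Q)$-equivariant \dga s, with $D^{\bigcdot,0}$ carrying the zero differential and the trivial $\Sn$-action; since taking $\Sn$-invariants over $\Q$ is exact and commutes with $\dd$, we obtain $H^\bigcdot(\Uconfn)\cong H^\bigcdot(\UB^{\bigcdot,\bigcdot})\otimes_\Q D^{\bigcdot,0}$. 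As a bigraded $\SL(\Q)$-representation $D^{\bigcdot,0}=H^\bigcdot(E)$ has $D^{0,0}\cong\V_0$, $D^{1,0}\cong\V_1$, $D^{2,0}\cong\Lambda^2\V_1\cong\V_0$ in bidegrees $(0,0),(1,0),(2,0)$, so (recalling that $A^{p,q}$ has weight $p+2q$ and degree $p+q$) the $D$-factor contributes exactly $[\V_0]+[\V_1]uv+[\V_0]u^2v^2$. It therefore suffices to show that the $\SL(\Q)$-equivariant Hodge polynomial of $H^\bigcdot(\UB^{\bigcdot,\bigcdot})$ is $\sum_{i=0}^{\lfloor (n-1)/2\rfloor}[\V_i]u^{2i}v^{3i}+\sum_{i=1}^{\lfloor n/2\rfloor-1}[\V_{i-1}]u^{2i+1}v^{3i+1}$.

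\emph{Weight complexes and their Euler characteristics.} Since $\dd$ has bidegree $(2,-1)$, it preserves the weight $w=p+2q$ and raises the total degree $p+q$ by one. Thus $\UB^{\bigcdot,\bigcdot}$ splits as a direct sum over $w$ of finite complexes $C^\bigcdot_w$ with $C^k_w=\UB^{\,2k-w,\,w-k}$, and the weight-$w$ summand of $H^k(\UB^{\bigcdot,\bigcdot})$ is $H^k(C^\bigcdot_w)$. The next step is to compute, in the Grothendieck ring of $\SL(\Q)$, the classes $[\UB^{p,q}]$: restricting \Cref{thm:strong_dec_E2} to $\Sn$-invariants gives $[\UA^{p,q}]=\sum_{a=0}^{p}\dim_\Q(\operatorname{coker}\pi_a)^{\Sn}\,[\V_a]$, and the factorisation $\UA^{\bigcdot,\bigcdot}=\UB^{\bigcdot,\bigcdot}\otimes D^{\bigcdot,0}$ recovers $[\UB^{p,q}]$ by inverting the identity $\sum_{p}[\UA^{p,q}]T^p=\bigl(1+[\V_1]T+T^2\bigr)\sum_{p}[\UB^{p,q}]T^p$ in the formal power series ring over the Grothendieck ring; alternatively one works straight from \Cref{thm:weak_dec_E2}, enumerating the marked partitions $(L_*,H_*)$ of $n$ with $|H_*|=p$ and $|L_*|=q$, computing the multiplicity of the trivial $\Sn$-representation in $\Ind^{\Sn}_{Z_{L_*,H_*}}\xi_{L_*,H_*}$, and recording the value of $\|H_*\|$, which is the $\SL(\Q)$-weight. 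The goal is the identity
\[
\chi_w\defeq\sum_k(-1)^k[C^k_w]=
\begin{cases}
[\V_i] & \text{if }w=3i,\\
-[\V_{i-1}] & \text{if }w=3i+1,\ i\geq 1,\\
0 & \text{otherwise},
\end{cases}
\]
where \Cref{cor:q<p+1}, i.e.\ $\UB^{p,q}=0$ for $q>p+1$, confines $C^\bigcdot_w$ to a finite band of degrees.

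\emph{From Euler characteristics to cohomology.} Write $b^k=\dim_\Q H^k(\UB^{\bigcdot,\bigcdot})$. By \Cref{thm:Poin_Cn} together with the $D$-factor, $\sum_k b^k=T_{n-1}(1,1)$; on the other hand $T(u,v)=\sum_{i\geq 0}(i+1)u^{2i}v^{3i}+\sum_{i\geq 1}i\,u^{2i+1}v^{3i+1}$, so after truncation $\sum_w|\dim\chi_w|=T_{n-1}(1,1)$ as well. Since $\sum_k b^k=\sum_w\sum_k\dim H^k(C^\bigcdot_w)\geq\sum_w\bigl|\sum_k(-1)^k\dim H^k(C^\bigcdot_w)\bigr|=\sum_w|\dim\chi_w|$, equality must hold in every weight, so $\sum_k\dim H^k(C^\bigcdot_w)=|\dim\chi_w|$ for all $w$. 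Combined with $\sum_k(-1)^k[H^k(C^\bigcdot_w)]=\chi_w$, this forces $\bigoplus_k H^k(C^\bigcdot_w)$ to be, as an $\SL(\Q)$-representation, isomorphic to $\V_i$ (resp.\ $\V_{i-1}$); being irreducible it sits in a single cohomological degree, and a short induction on the degree, using the non-vanishing of the Betti numbers in \Cref{thm:Poin_Cn} and the support bound of \Cref{cor:q<p+1}, identifies this degree as $2i$ (resp.\ $2i+1$). Reassembling with the $D$-factor yields the first displayed formula of the theorem, and taking dimensions, using $1+2uv+u^2v^2=(1+uv)^2$, gives the ordinary Hodge polynomial $(1+uv)^2T_{n-1}(u,v)$.

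\emph{Main obstacle.} The reduction and the passage from Euler characteristics to cohomology are essentially formal once \Cref{thm:Poin_Cn} is granted; the real work is the Grothendieck-ring computation of $\chi_w$, that is, sifting through the marked partitions of $n$ permitted by \Cref{thm:weak_dec_E2} along each weight line, keeping track of the one-dimensional characters $\xi_{L_*,H_*}$ and of the numbers $\|H_*\|$, isolating the contribution of the $\gamma,\overline{\gamma}$ (i.e.\ $D^{\bigcdot,0}$) factor, and verifying that the alternating sum collapses to a single irreducible class.
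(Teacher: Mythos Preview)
Your reduction to $H^\bigcdot(\UB^{\bigcdot,\bigcdot})$ via the tensor decomposition $\UA\cong\UB\otimes D$ is correct and matches the paper. The argument you give for upgrading Euler characteristics to cohomology is also sound: once $\chi_w$ is known as a virtual $\SL(\Q)$-representation, the comparison of $\sum_k b^k$ with $\sum_w|\dim\chi_w|$ forces each weight strand to carry a single irreducible in a single cohomological degree, and that degree is then pinned down by \Cref{cor:q<p+1} together with the Betti numbers from \Cref{thm:Poin_Cn}.

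The genuine gap is that you never compute $\chi_w$. You describe a procedure (enumerate marked partitions, compute the multiplicity of the trivial representation via Frobenius reciprocity, record $\|H_*\|$, peel off the $D$-factor), state the expected answer, and then explicitly postpone the calculation to your ``Main obstacle'' paragraph. But that calculation \emph{is} the content of the theorem beyond the known Betti numbers: without it, nothing has been established about the $\SL(\Q)$-structure or the Hodge weights. Showing that the alternating sum of the $[\UB^{p,q}]$ along each weight line collapses to a single $\pm[\V_i]$ is a substantial combinatorial verification, not a formality, and you have not carried it out.

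The paper sidesteps this combinatorics entirely by a different route. It constructs explicit closed $\Sn$-invariant elements $\alpha,\overline\alpha\in\UB^{1,1}$ and $\beta\in\UB^{1,2}$ (Lemmas~\ref{lemma:alpha} and~\ref{lemma:beta}), and proves by a direct coefficient computation that $\alpha^q\neq 0$ for $n>2q$ and $\alpha^{q-1}\beta\neq 0$ for $n>2q+1$ (Lemmas~\ref{lemma:a^k} and~\ref{lemma:a^kb}). Since $\alpha$ is a highest-weight vector, its $\SL(\Q)$-orbit spans a copy of $\V_q$ inside $H^{q,q}(\UB)$, and similarly $\alpha^{q-1}\beta$ gives $\V_{q-1}\subset H^{q,q+1}(\UB)$; comparing dimensions with \Cref{thm:Poin_Cn} shows these exhaust the cohomology. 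Beyond being complete, this approach has the further payoff that the same generators $\alpha,\beta$ immediately yield the ring structure (\Cref{thm:main_ring}) and formality (\Cref{cor:formality}); an Euler-characteristic argument, even if carried through, would not provide these.
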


\Cref{fig:E3_inv} represents the module $H^{\bigcdot,\bigcdot}(\UB)$ that corresponds to the right factor of eq.~\eqref{eq:MH poly in G ring}.

\begin{figure}
\centering
\begin{tabular}{C{.48\textwidth}C{.48\textwidth}}
\subfigure[Case $n=2q+1$ odd.]{
\begin{tikzpicture}[ampersand replacement=\&]
\matrix (m) [matrix of math nodes,
             nodes in empty cells,
             nodes={inner sep=0pt, outer sep=0pt,
                    anchor=base},
             column sep=1ex, row sep=1ex,
             anchor=center
             ]%
{
q \&[2ex] \& \& \& \& \& \V_q \\
q-1 \&[2ex] \& \& \& \& \V_{q-1} \& \V_{q-2} \\
\vdots \&[2ex] \& \& \& \iddots \& \iddots \& \\
[3ex,between origins]
2 \&[2ex] \& \& \V_2 \& \V_1 \& \& \\
1 \&[2ex] \& \V_1 \& \V_0 \& \& \& \\
0 \&[2ex] \V_0 \& \& \& \& \& \\
    [3ex,between origins]
\&[2ex]  0 \& 1 \& 2 \& 3 \& \cdots \& q  \strut \\
};
\draw[thick] ($(m-1-2.north)!0.5!(m-1-1.north)$) -- ($(m-7-2)!0.5!(m-7-1)$) ;
\draw[thick] ($(m-6-1.west)!0.5!(m-7-1.west)$) -- ($(m-6-7.east)!0.5!(m-7-7.east)$);
\end{tikzpicture}
} &
\subfigure [Case $n=2q+2$ even.] {
\begin{tikzpicture}[ampersand replacement=\&]
\matrix (m) [matrix of math nodes,
             nodes in empty cells,
             nodes={inner sep=0pt, outer sep=0pt,
                    anchor=base},
             column sep=1ex, row sep=1ex,
             anchor=center
             ]%
{
q \&[2ex] \& \& \& \& \& \V_q \&\V_{q-1} \\
\vdots \&[2ex] \& \& \& \& \iddots \& \iddots \\
[3ex,between origins] 
3 \&[2ex] \& \& \& \V_3 \& \V_2 \& \\
2 \&[2ex] \& \& \V_2 \& \V_1 \& \& \\
1 \&[2ex] \& \V_1 \& \V_0 \& \& \& \& \\
0 \&[2ex] \V_0 \& \& \& \& \& \& \\
    [3ex,between origins]
\&[2ex]  0 \& 1 \& 2 \& 3 \& \cdots \& q \& q+1  \strut \\
};
\draw[thick] ($(m-1-2.north)!0.5!(m-1-1.north)$) -- ($(m-7-2)!0.5!(m-7-1)$) ;
\draw[thick] ($(m-6-1.west)!0.5!(m-7-1.west)$) -- ($(m-6-8.east)!0.5!(m-7-8.east)$);
\end{tikzpicture}
}
\end{tabular}
\caption{The algebra $H^{\bigcdot,\bigcdot}(\UB )$ as representation of $\SL(\Q)$.}\label{fig:E3_inv}
\end{figure}

\subsection{Some elements in cohomology}
\begin{definition}
Let $\alpha, \overline{\alpha} \in A^{1,1}, \beta \in A^{1,2}$ be the elements
\begin{align*}
\alpha & \defeq \sum_{i,k<h} (x_i-x_k)\omega_{k,h} \\
\overline{\alpha} & \defeq \sum_{i,k<h} (y_i-y_k)\omega_{k,h} \\
\beta & \defeq \sum_{i,j,k<h} (3x_i-x_j-2x_k)(y_j-y_k) \omega_{k,h} 
\end{align*}
where the sum is taken over pairwise distinct indices $i,j,k,h$ with $k<h$.
\end{definition}
Notice that the elements $\alpha$ and $\overline{\alpha}$ are defined only for $n>2$ and $\beta$ for $n>3$.
Remember that $\gamma, \overline{\gamma} \in D^1$ were already defined as $\sum_i x_i$ and  $\sum_i y_i$.

\begin{lemma}\label{lemma:alpha}
The element $\alpha$ belongs to $\UB^{1,1}$, is non-zero, and $\dd \alpha=0$.
\end{lemma}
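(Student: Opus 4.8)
The plan is to verify the three assertions in order: first that $\alpha$ is $\Sn$-invariant, then that it is a nonzero class in $B^{1,1}$, and finally that it is a cocycle. Invariance is the easiest: by definition $\alpha = \sum (x_i - x_k)\omega_{k,h}$ where the sum runs over pairwise distinct $i,k,h$ with $k<h$. Rewriting the constraint $k<h$ symmetrically (using $\omega_{k,h} = \omega_{h,k}$) one sees that $\alpha = \tfrac12 \sum_{i,k,h \text{ distinct}} (x_i - x_k)\omega_{k,h}$, and an arbitrary $\sigma \in \Sn$ simply permutes the index set of pairwise distinct triples while carrying $(x_i-x_k)\omega_{k,h}$ to $(x_{\sigma(i)} - x_{\sigma(k)})\omega_{\sigma(k),\sigma(h)}$; hence $\sigma^{-1}\alpha = \alpha$. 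Since $\alpha$ is manifestly a polynomial in the generators $u_{a,b} = x_a - x_b$ and $\omega_{a,b}$, it lies in $B^{1,1} \subseteq \UB^{1,1}$ by the definition of $B^{\bigcdot,\bigcdot}$.

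Next I would compute $\dd\alpha$. Since $\dd x_i = 0$, the Leibniz rule gives $\dd\alpha = \sum_{i,k<h}(x_i - x_k)\,\dd\omega_{k,h} = \sum_{i,k<h}(x_i - x_k)(x_k - x_h)(y_k - y_h)$, the sum over pairwise distinct $i,k,h$. Here I must use the relations of $A^{\bigcdot,\bigcdot}$: the element $(x_k-x_h)(y_k-y_h)$ is $\dd\omega_{k,h}$, but more usefully one can reorganize the sum. Fix $k,h$; summing $x_i$ over all $i \notin \{k,h\}$ gives $(\gamma - x_k - x_h)$, so $\dd\alpha = \sum_{k<h}\big((\gamma - x_k - x_h) - (n-2)x_k\big)(x_k-x_h)(y_k-y_h)$. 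The term $\gamma(x_k-x_h)(y_k-y_h) = \gamma\,\dd\omega_{k,h}$ sums to $\gamma\,\dd(\sum_{k<h}\omega_{k,h})$; but this is not obviously zero in $A$, so instead I expect the right move is to expand $(x_i - x_k)(x_k-x_h)(y_k-y_h)$ directly and exploit antisymmetry: after expansion every monomial is of the form $x_a x_b y_c$ with $\{b,c\} \subseteq \{k,h\}$, and swapping $k \leftrightarrow h$ (which fixes the summation range) sends each such monomial to its negative while the extra index $i$ is untouched — so the whole sum cancels in pairs. This is the one genuine computation, and it is the step I'd expect to be the main (albeit modest) obstacle: one has to be careful that the relations $(x_k - x_h)\omega_{k,h} = 0$ are not needed here (they are not, since $\dd$ lands in $A^{2,0}$ which is a free polynomial piece) and that the antisymmetrization argument is applied to the correct set of free variables.

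Finally, nonvanishing. One cannot argue in $A^{\bigcdot,\bigcdot}$ naively because of the quadratic relations; the clean route is to produce a linear functional on $A^{1,1}$ that does not vanish on $\alpha$, or to use the decomposition of \Cref{thm:weak_dec_E2}. Concretely, $A^{1,1}$ is spanned by the classes $u_{a,b}\omega_{c,d}$ and $\omega_{a,b}x_c$-type monomials modulo the relations $(x_a-x_b)\omega_{a,b}=0$ and the Arnol'd-type three-term relation; a basis is given, say, by $\{x_c\,\omega_{a,b}\}$ with a suitable normalization, and $\alpha$ written in such a basis has nonzero coordinates (for $n > 2$ the coefficient of, e.g., $x_1\omega_{2,3}$ is nonzero and survives the relations). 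Alternatively, and more in the spirit of the paper, $\alpha$ lies in the summand $A^{1,1}_1$ (it has $\|H_*\| = 1$), and one checks that its image under the $\Sn$-isotypic projection is the class corresponding to the marked partition $L_* = (2,1^{n-2})$, $H_* = (x, 1, \dots, 1)$ composed appropriately — this summand is nonzero precisely for $n > 2$, matching the stated hypothesis. Either way, the verification reduces to exhibiting one monomial whose coefficient is provably nonzero modulo the ideal, which is routine once a monomial basis for $A^{1,1}$ is fixed.
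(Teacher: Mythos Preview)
Your invariance and nonvanishing arguments follow the paper's approach. One small correction: the identity $\alpha=\tfrac12\sum_{i,k,h\text{ distinct}}(x_i-x_k)\omega_{k,h}$ does not follow from $\omega_{k,h}=\omega_{h,k}$ alone; symmetrizing over $k,h$ gives $\sum_{k<h}(2x_i-x_k-x_h)\omega_{k,h}$, and one must invoke the relation $x_k\omega_{k,h}=x_h\omega_{k,h}$ to identify this with $2\alpha$. The paper uses exactly that relation (in the form $u_{\sigma(i),\sigma(k)}\omega_{\sigma(k),\sigma(h)}=u_{\sigma(i),\sigma(h)}\omega_{\sigma(k),\sigma(h)}$) for invariance, and then reads off the coefficient of $x_3\omega_{1,2}$ for nonvanishing, just as you propose.

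The genuine gap is your argument for $\dd\alpha=0$. The summand $(x_i-x_k)(x_k-x_h)(y_k-y_h)$ is \emph{invariant} under $k\leftrightarrow h$, not antisymmetric: the factors $(x_k-x_h)$ and $(y_k-y_h)$ each change sign, so their product does not, and the remaining factor becomes $(x_i-x_h)\neq -(x_i-x_k)$. Moreover the swap does not fix the range $k<h$, and the monomials coming from the piece $x_kx_h(y_k-y_h)$ contain no $i$ at all, so your description ``the extra index $i$ is untouched'' fails for them. What the paper actually does is expand in the free exterior algebra (you are right that no quotient relations intervene in $A^{3,0}$), pass to an unrestricted sum over distinct $(i,k,h)$, and reduce everything to $-\sum_{i,k,h\text{ distinct}} x_i x_h y_k$; this vanishes by swapping the two \emph{$x$-indices} $i\leftrightarrow h$, using $x_ix_h=-x_hx_i$. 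So the antisymmetry that kills $\dd\alpha$ lives in the pair of $x$-variables appearing in the monomial, not in the pair $(k,h)$ attached to $\omega_{k,h}$.
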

\begin{proof}
First observe that $\alpha=\sum_{i,k<h} u_{i,k}\omega_{k,h} \in B^{1,1}$.
For all $\sigma \in \Sn $ we have 
\[\sigma \alpha = \sum_{i,k<h} u_{\sigma(i),\sigma(k)}\omega_{\sigma(k),\sigma(h)} =\alpha,\]
since $u_{\sigma(i),\sigma(k)}\omega_{\sigma(k),\sigma(h)}=u_{\sigma(i),\sigma(h)}\omega_{\sigma(k),\sigma(h)}$ in $A$. 
The elements $x_i\omega_{k,h}$ and $x_k\omega_{k,h}$ are linearly independent, so it is enough to observe that the coefficient of $x_3\omega_{1,2}$ is $1$.
This proves that $\alpha \neq 0$.
Finally, we compute $\dd \alpha$:
\begin{align*}
\dd \alpha &= \sum_{i,k<h} x_i \dd \omega_{k,h} - x_k \dd \omega_{k,h} \\
&= \sum_{i,k<h} x_i(x_k-x_h)(y_k-y_h) + x_k x_h(y_k-y_h) \\
&= \sum_{i,k,h} x_i x_k y_k- x_ix_h y_k + x_k x_h y_k \\
&= - \sum_{i,k,h} x_ix_h y_k =0,
\end{align*}
where all sums are taken over pairwise distinct indices and the first two with the additional condition $k<h$.
\end{proof}

\begin{lemma}\label{lemma:beta}
The element $\beta$ belongs to $\UB^{1,2}$, is non-zero, and $\dd \beta=0$.
\end{lemma}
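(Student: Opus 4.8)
The plan is to mirror the three-part structure of the proof of \Cref{lemma:alpha}, treating $\beta$ in the same way. First I would rewrite $\beta$ in terms of the subalgebra generators: since $(3x_i-x_j-2x_k)(y_j-y_k) = (3x_i - x_j - 2x_k)(y_j - y_k)$ and we may freely add multiples of $\omega_{k,h}$-killing combinations, the aim is to express $\beta$ as a $\Q$-linear combination of monomials $u_{a,b}v_{c,d}\omega_{k,h}$, which lands in $B^{1,2}$ by definition. Concretely one groups the $x$-part: $3x_i - x_j - 2x_k = 3(x_i-x_k) - (x_j - x_k) = 3u_{i,k} - u_{j,k}$, and writes $y_j - y_k = v_{j,k}$; so $\beta = \sum (3u_{i,k} - u_{j,k})v_{j,k}\,\omega_{k,h}$, manifestly in $B^{1,2}$.

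For $\Sn$-invariance, I would argue exactly as for $\alpha$: applying $\sigma\in\Sn$ permutes the summation indices $i,j,k,h$ (which already run over all pairwise-distinct tuples with $k<h$), so $\sigma\beta$ equals the same sum after reindexing, \emph{provided} one checks that the apparent dependence on which of $k,h$ is smaller is immaterial. As in \Cref{lemma:alpha}, the relations $(x_k-x_h)\omega_{k,h}=0$ and $(y_k-y_h)\omega_{k,h}=0$ let one replace $x_k$ by $x_h$ and $y_k$ by $y_h$ against $\omega_{k,h}$, so the summand is symmetric enough in $k\leftrightarrow h$ that the $k<h$ restriction is a harmless normalization; hence $\sigma\beta=\beta$ and $\beta\in\UB^{1,2}$.

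Non-vanishing: I would extract the coefficient of a single well-chosen monomial in a basis of $A^{1,2}$. A good candidate is $x_4 y_2\,\omega_{1,3}$ (or similar with all four indices distinct); the terms of $\beta$ contributing to $\omega_{1,3}$ come from $(k,h)=(1,3)$, and among those one reads off the coefficient of $x_iy_j$ with $i\notin\{1,3\}$, $j\notin\{1,3\}$, checking it is a nonzero integer (it will be $3$ from the $3x_i$ term) and that this monomial survives in the quotient $A$, i.e.\ is not killed by the relations. Finally, $\dd\beta=0$ is the one genuinely computational step: expand $\dd\beta = \sum (3x_i - x_j - 2x_k)(y_j-y_k)\,\dd\omega_{k,h}$ with $\dd\omega_{k,h} = (x_k-x_h)(y_k-y_h)$, let the indices $i,j,k,h$ now range over all pairwise-distinct $4$-tuples (dropping $k<h$ costs a factor $2$), and show the resulting degree-$(3,0)\otimes$-stuff sum telescopes to zero, just as the $-\sum x_ix_hy_k$ did for $\alpha$. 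I expect this last cancellation to be the main obstacle: one must carefully collect the cubic-in-$x$, quadratic-in-$x$-linear-in-$y$ etc.\ contributions and use antisymmetry of the exterior product together with the symmetry of the index set to see everything vanishes; the coefficients $3,-1,-2$ in the definition of $\beta$ are presumably chosen precisely so that this works, which is a useful sanity check while carrying out the bookkeeping.
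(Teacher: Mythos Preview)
Your plan is correct and mirrors the paper's proof almost exactly: the paper rewrites $\beta=\sum (u_{i,j}+2u_{i,k})v_{j,k}\omega_{k,h}$ (equivalent to your $3u_{i,k}-u_{j,k}$ form), checks $\Sn$-invariance via the relations $u_{j,k}\omega_{k,h}=u_{j,h}\omega_{k,h}$ and $v_{j,k}\omega_{k,h}=v_{j,h}\omega_{k,h}$, and shows $\beta\neq 0$ by evaluating at the monomial $x_3y_4\omega_{1,2}$ (your $x_4y_2\omega_{1,3}$ works just as well, with the same coefficient~$3$). For $\dd\beta=0$ the paper saves some bookkeeping by splitting off the $3x_i(y_j-y_k)\omega_{k,h}$ piece and observing its differential vanishes by the computation already done for $\alpha$, then handling the remaining $-(x_j+2x_k)(y_j-y_k)\omega_{k,h}$ part directly; you may find this split easier than the full head-on expansion you anticipate.
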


\begin{proof}
Observe that
\[ \beta = \sum_{i,j,k<h} (u_{i,j}+2u_{i,k})v_{j,k} \omega_{k,h} \in B^{1,2}\]
and that $\sigma \beta = \beta$ for all $\sigma \in \Sn $ by the relations $u_{j,k} \omega_{k,h} = u_{j,h} \omega_{k,h}$ and $v_{j,k} \omega_{k,h} = v_{j,h} \omega_{k,h}$.
Consider the map $\varphi \co A \to \Q$ defined on generators by $\varphi (\omega_{1,2})=1$, $\varphi (x_3)=1$ and $\varphi(y_4)=1$ and zero on the other generators.
The map $\varphi$ is well defined and $\varphi(\beta)=3$, thus $\beta \neq 0$.
Using the computation in the proof of \Cref{lemma:alpha}, we can observe that $\dd \big( \sum_{i,j,h<k} 3x_i (y_j-y_k) \omega _{k,h} \big)=0$.
The claim $\dd \beta=0$ follows from:
\begin{align*}
\dd(\beta)&= \dd \Big( \sum_{j,k,h} (x_j+2x_k)(y_j-y_k) \omega_{k,h} \Big)\\
&= \sum_{j, k<h} x_jy_j \dd \omega_{k,h} + x_j y_k (x_k-x_h)y_h -2 x_k y_j x_h (y_k-y_h) -2 x_ky_kx_hy_h\\
&= \sum_{j,k,h} x_jy_j x_k y_k- x_iy_j x_h y_k + x_j y_k x_k y_h -2 x_k y_j x_h y_k - x_ky_kx_hy_h\\
&=0,
\end{align*}
where the indexes of the sums are pairwise distinct.
\end{proof}

\begin{lemma}\label{lemma:a^k}
For $n>2q$ the element $\alpha^q$ is non-zero.
\end{lemma}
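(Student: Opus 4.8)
The plan is to exhibit an explicit linear functional on $B^{2q,q}$ that does not vanish on $\alpha^q$, which simultaneously shows $\alpha^q \neq 0$. First I would expand $\alpha^q = \big(\sum_{i,k<h} u_{i,k}\omega_{k,h}\big)^q$ and record which monomials can survive in the exterior algebra $A^{\bigcdot,\bigcdot}$ modulo the Krí\v z relations. Since $\alpha$ has bidegree $(1,1)$, the product $\alpha^q$ lives in $B^{2q,q}$, and every surviving monomial is (up to sign) a product of $q$ distinct factors $\omega_{k_1,h_1}\cdots\omega_{k_q,h_q}$ together with $q$ linear forms $u_{i_1,k_1}\cdots u_{i_q,k_q}$, subject to the relations $u_{k,h}\omega_{k,h}=0$ and the Arnol'd (three-term) relation $\omega_{i,j}\omega_{j,k}-\omega_{i,j}\omega_{k,i}+\omega_{j,k}\omega_{k,i}=0$. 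The hypothesis $n > 2q$ guarantees that there are enough indices to choose the $2q$ indices $k_1,h_1,\dots,k_q,h_q$ all distinct, so that the corresponding monomial is not killed by the relations; this is precisely where the bound $n>2q$ enters.

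Concretely, I would pick the monomial obtained from the "diagonal" term of the multinomial expansion where the $q$ chosen $\omega$'s are $\omega_{1,2},\omega_{3,4},\dots,\omega_{2q-1,2q}$ and the attached linear factors $u_{i_j,k_j}$ use fresh variables $x_{i_j}$ with $i_j \notin \{1,\dots,2q\}$ — but since $n>2q$ only guarantees one extra index, a cleaner choice is $u_{2q+1,1}\,\omega_{1,2}\cdot u_{2q+1,3}\,\omega_{3,4}\cdots$, or better, to pick the terms of $\alpha$ of the shape $x_{h+1}\,\omega_{?}$ that remain linearly independent. The functional $\varphi\co A \to \Q$ should be defined on generators exactly as in the proofs of \Cref{lemma:alpha} and \Cref{lemma:beta}: send the chosen $\omega_{k_j,h_j}$ and one chosen variable per block to $1$, everything else to $0$, checking that $\varphi$ descends to the quotient (it kills $\dd\omega$ and respects the three-term relation since at most one $\omega$ from each relation triple is hit). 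Then $\varphi(\alpha^q)$ equals the number of ways the multinomial expansion produces the selected monomial, which is a nonzero integer (a factorial-type count), so $\alpha^q \neq 0$.

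The main obstacle is bookkeeping the sign and the combinatorics of the expansion: $\alpha$ is a sum over triples $(i,k,h)$ with $k<h$, so $\alpha^q$ is a sum of products of $q$ such terms, and one must argue that after imposing $u_{k,h}\omega_{k,h}=0$ and the Arnol'd relations, the monomials that map to the chosen "target" under $\varphi$ do not cancel among themselves. I would handle this by choosing $\varphi$ so that its support picks out a single reduced monomial (i.e., a fixed tuple of $q$ disjoint pairs $\{k_j<h_j\}$ together with fixed distinct "extra" indices $i_j$), so that only the terms of the expansion that reproduce exactly that tuple contribute; among those the signs agree because they differ only by a permutation of the $q$ commuting factors $u_{i_j,k_j}\omega_{k_j,h_j}$ (each of even total degree $2$), and so the count is manifestly positive. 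An alternative, if the direct count is delicate, is to use \Cref{thm:strong_dec_E2} or \Cref{cor:q<p+1} to locate $\alpha^q$ inside $\UB^{2q,q}$ and compare dimensions, but the explicit-functional approach mirrors the style already used for $\alpha$ and $\beta$ and should be the shortest route.
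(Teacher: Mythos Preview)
Your overall strategy---extract the coefficient of a well-chosen monomial in $\alpha^q$---is exactly what the paper does, but your specific choice of monomial and your sign analysis both break down in the range $2q<n<3q$, which is precisely where the lemma has content.

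You propose to target a monomial with $q$ disjoint pairs $\{k_j,h_j\}$ for the $\omega$'s together with $q$ \emph{distinct} external indices $i_j\notin\{k_1,h_1,\dots,k_q,h_q\}$ for the $x$'s. That requires $3q$ distinct indices, so it only works for $n\ge 3q$; you notice this yourself (``$n>2q$ only guarantees one extra index'') but the alternatives you sketch do not repair it. The suggestion $u_{2q+1,1}\omega_{1,2}\cdot u_{2q+1,3}\omega_{3,4}\cdots$ repeats $x_{2q+1}$ and dies in the exterior algebra for $q\ge 2$. More seriously, your claim that ``the signs agree \dots\ so the count is manifestly positive'' is false for any monomial whose $x$-indices overlap the $\omega$-indices: different distributions of $x$'s and $\omega$'s among the $q$ factors then carry different scalar coefficients (some equal to $1$, some equal to $2-n$), and the resulting sum has genuine cancellation.

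The paper instead takes the monomial $m=x_1\omega_{1,2}\,x_3\omega_{3,4}\cdots x_{2q-1}\omega_{2q-1,2q}$, which uses only the indices $1,\dots,2q$ and hence makes sense for all $n\ge 2q$. Its coefficient in $\alpha^q$ is
\[
a_q=q!\sum_{\sigma\in\mathfrak{S}_q}\sgn(\sigma)\,(2-n)^{|\Fix\sigma|}\,2^{\,q-|\Fix\sigma|},
\]
which is recognized as $q!\,2^q$ times the determinant of the $q\times q$ matrix with $\tfrac{2-n}{2}$ on the diagonal and $1$ off the diagonal, giving $a_q=(-1)^q q!\,n^{q-1}(n-2q)$. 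This vanishes exactly at $n=2q$, so the cancellation you dismissed is essential---it is what makes the hypothesis $n>2q$ sharp. The missing idea in your attempt is this determinant evaluation; without it one cannot cover the critical range $2q<n<3q$.
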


\begin{proof}
Let us rewrite $\alpha$ as
$\alpha= \sum_{i,k<h} x_i \omega_{k,h} + (2-n) \sum_{k<h} x_k \omega_{k,h}$.
We show that the coefficient of the monomial $m=x_1\omega_{1,2} x_3\omega_{3,4} \dots x_{2q-1}\omega_{2q-1,2q}$ (defined for $n\geq 2q$) in $\alpha^q$ is non-zero for $n>2q$.

This coefficient is
\[ a_q = q! \sum_{\sigma \in \mathfrak{S}_q} \sgn(\sigma) (2-n)^{|\Fix \sigma|}2^{q-|\Fix \sigma|}.\]
Where $q!$ are the ways to choose each $\omega_{2k-1,2k}$ one from each factors, and since $x_{2i-1}\omega_{2k-1,2k}$ has even degree we can suppose (up to the factor $q!$) that $\omega_{2k-1,2k}$ is taken from the $k$-th factor.
Now $x_{2i-1}$ arises from either $x_{2i-1}$ or $x_{2i}$ of the $\sigma(i)$-th factor for some permutation $\sigma \in \mathfrak{S}_q$.
Since $m= \sgn(\sigma) \prod_{i=1}^q x_{2\sigma^{-1}(k)-1} \omega_{2k-1,2k}$ the contribution has the sign of the permutation $\sigma$.
Finally, in $\alpha$ the monomial $x_{2i-1}\omega_{2k-1,2k}$ has coefficient $(2-n)$ if $i=k$ and $1$ otherwise and the monomial $x_{2i}\omega_{2k-1,2k}$ has coefficient $0$ if $i=k$ and $1$ otherwise.

We claim that
\begin{equation} \label{eq:det}
\sum_{\sigma \in \mathfrak{S}_q} \sgn(\sigma) x^{|\Fix \sigma|}= (x-1)^{q-1}(x+q-1),
\end{equation}
since both sides are the determinant of the matrix
\[\left( \begin{array}{cccc}
x & 1 & \cdots & 1 \\ 
1 & x & \cdots & 1 \\ 
\vdots & \vdots & \ddots & \vdots \\ 
1 & 1 & \cdots & x
\end{array} \right). \]
The left hand side of eq~\eqref{eq:det} is obtained by using the Laplace formula for the determinant and the right hand side by relating the that determinant to the characteristic polynomial $(-t)^{q-1}(q-t)$ of the matrix $\left( \begin{smallmatrix} 1 & \cdots & 1 \\ 
\vdots & \ddots & \vdots \\ 
1 & \cdots & 1 \end{smallmatrix} \right)$.
We use eq~\eqref{eq:det} with $x=\frac{2-n}{2}$ to obtain:
\[ a_q= q!2^q \sum_{\sigma \in \mathfrak{S}_q} \sgn(\sigma) \Big( \frac{2-n}{2} \Big) ^{|\Fix \sigma|}= q!2^q \Big( \frac{-n}{2} \Big) ^{q-1} \Big( \frac{2q-n}{2} \Big) \]
Thus $a_q= (-1)^q q!n^{q-1}(n-2q)$ that is non-zero for $n>2q$.
\end{proof}

\begin{lemma}\label{lemma:a^kb}
For $n>2q+1$ the element $\alpha^{q-1}\beta$ is non-zero.
\end{lemma}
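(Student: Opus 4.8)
The plan is to mimic the strategy of Lemma~\ref{lemma:a^k}: exhibit a single monomial in the expansion of $\alpha^{q-1}\beta$ whose coefficient we can compute explicitly and show it is nonzero for $n>2q+1$. The natural candidate is the monomial
\[
m' = x_1\omega_{1,2}\, x_3\omega_{3,4} \cdots x_{2q-3}\omega_{2q-3,2q-2} \cdot x_{2q-1} y_{2q} \omega_{2q-1,2q}\, ,
\]
which pairs $q-1$ "$\alpha$-type" factors $x_{2i-1}\omega_{2i-1,2i}$ with one "$\beta$-type" factor of bidegree $(2,2)\cdot$. This monomial is defined for $n\geq 2q$, but we will only be able to control its coefficient (and conclude nonvanishing) for $n$ large, specifically $n>2q+1$, matching the hypothesis.

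The key steps, in order: First I would recall the rewritings $\alpha=\sum_{i,k<h}x_i\omega_{k,h}+(2-n)\sum_{k<h}x_k\omega_{k,h}$ from Lemma~\ref{lemma:a^k} and, analogously, an expression for $\beta$ in which each generator $\omega_{k,h}$ appears with an explicit polynomial coefficient in the $x$'s and $y$'s (obtained from the definition by using the relations $u_{j,k}\omega_{k,h}=u_{j,h}\omega_{k,h}$, $v_{j,k}\omega_{k,h}=v_{j,h}\omega_{k,h}$ to push all subtracted indices to $h$). Second, expanding the product $\alpha^{q-1}\beta$, I would argue that the only way to produce the monomial $m'$ is to take $\omega_{2i-1,2i}$ from one of the first $q-1$ factors for each $i<q$ and $\omega_{2q-1,2q}$ from the $\beta$ factor; as in Lemma~\ref{lemma:a^k} this accounts for a combinatorial prefactor (an $(q-1)!$ for distributing the $\alpha$-$\omega$'s among the $\alpha$ factors, times a choice of which $\alpha$-factor each $x_{2i-1}$ comes from, encoded by a permutation $\sigma\in\mathfrak{S}_{q-1}$) together with the contribution of the $\beta$ factor to the $x_{2q-1}y_{2q}\omega_{2q-1,2q}$ part. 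Third, I would assemble the coefficient as a product of the determinant appearing in eq.~\eqref{eq:det} (evaluated again at $x=\tfrac{2-n}{2}$, giving a factor proportional to $n^{q-2}(n-2q+2)$ coming from the $\alpha$-block) and a linear-in-$n$ factor $c(n)$ coming from the coefficient of $x_{2q-1}y_{2q}\omega_{2q-1,2q}$ inside $\beta$, which one computes by evaluating the explicit coefficient polynomial of $\omega_{2q-1,2q}$ in $\beta$ on the relevant monomial. The conclusion is that the coefficient of $m'$ in $\alpha^{q-1}\beta$ equals a nonzero rational constant times $n^{q-2}(n-2q+2)\,c(n)$, which is nonzero precisely when $n>2q+1$ (the factor $c(n)$ must be checked to be a nonzero linear polynomial whose only relevant root is $\le 2q+1$).

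I expect the main obstacle to be bookkeeping the coefficient of $\omega_{2q-1,2q}$ inside $\beta$ once all indices are normalized to $h=2q$: one must rewrite $\beta=\sum_{i,j,k<h}(3x_i-x_j-2x_k)(y_j-y_k)\omega_{k,h}$, fix $(k,h)=(2q-1,2q)$, sum over the remaining free indices $i,j$ (ranging over all labels distinct from $2q-1,2q$ and from each other), and then isolate the part that, after substituting $y_k\to y_h$ etc.\ via the relations, contributes the pure monomial $x_{2q-1}y_{2q}$ (up to the other $x$'s which get absorbed by the $\alpha$-block). This sum over $i,j$ introduces extra dependence on $n$ beyond the naive linear term, so the delicate point is verifying that, after combining with the $(2-n)$-shifts already present in the $\alpha$-factors, the total $n$-dependence factors as claimed and that the resulting polynomial in $n$ does not acquire spurious roots in the range $n>2q+1$. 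As in the previous two lemmas, once the coefficient is written as a product of a determinant and elementary factors, the nonvanishing is immediate; alternatively, if the direct monomial extraction proves too intricate, a cleaner route is to apply an explicit linear functional $\varphi\colon A\to\Q$ (as in Lemma~\ref{lemma:beta}) sending $\omega_{2i-1,2i}\mapsto 1$ for $i\le q$, $x_{2i-1}\mapsto 1$ for $i<q$, $x_{2q-1}\mapsto 1$, $y_{2q}\mapsto 1$, and everything else to $0$, check it is compatible with the ideal, and evaluate $\varphi(\alpha^{q-1}\beta)$ directly — this trades the determinant computation for a (still $n$-dependent) counting argument but avoids tracking signs.
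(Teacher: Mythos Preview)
Your overall strategy---pick a monomial and compute its coefficient---is the same as the paper's, but the execution has a genuine gap.

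You assert that ``the only way to produce the monomial $m'$ is to take $\omega_{2i-1,2i}$ from one of the first $q-1$ factors for each $i<q$ and $\omega_{2q-1,2q}$ from the $\beta$ factor.'' This is false. The variable $y_{2q}$ must indeed come from $\beta$ (since $\alpha$ contains no $y$'s), but the single $\omega$ contributed by $\beta$ need not be $\omega_{2q-1,2q}$: in $\beta=\sum_{i,j,k<h}(3x_i-x_j-2x_k)(y_j-y_k)\omega_{k,h}$ the index $j$ of $y_j$ is decoupled from the indices $k,h$ of $\omega_{k,h}$. So $\beta$ may supply, say, $x_3\,y_{2q}\,\omega_{1,2}$ while one of the $\alpha$-factors supplies $x_1\omega_{2q-1,2q}$. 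Consequently the coefficient of $m'$ does \emph{not} factor as $\bigl(\text{determinant from }\alpha^{q-1}\bigr)\times c(n)$; cross terms mixing the $\alpha$-block with $\beta$ are unavoidable. Your choice $y_{2q}$ aggravates this, since $y_{2q}\omega_{2q-1,2q}=y_{2q-1}\omega_{2q-1,2q}$ in $A$, so the $(y_j-y_k)$ part of $\beta$ contributes through the $y_k$-term as well.

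The paper avoids both problems by taking $y_{2q+1}$ instead of $y_{2q}$, i.e.\ the monomial
\[
x_1\omega_{1,2}\,x_3\omega_{3,4}\cdots x_{2q-1}\omega_{2q-1,2q}\,y_{2q+1},
\]
with the $y$-index \emph{outside} every $\omega$-pair. Then, after rewriting $\beta$, only two of its four summands can contribute $y_{2q+1}$, and a short manipulation shows the coefficient equals that of the same monomial in
\[
3\,\alpha^{q}\,y_{2q+1}\;+\;n\,\alpha^{q-1}\Bigl(\sum_{k<h}x_k\omega_{k,h}\Bigr)y_{2q+1}.
\]
Now Lemma~\ref{lemma:a^k} applies directly (to $\alpha^{q}$ and, via the same determinant, to $\alpha^{q-1}$), yielding
\[
b_q \;=\; 3(-1)^{q}q!\,n^{q-1}(n-2q)\;+\;n\,q\,(-1)^{q-1}(q-1)!\,n^{q-2}(n-2q+2)
\;=\;2(-1)^{q}q!\,n^{q-1}(n-2q-1).
\]
Note this is a \emph{sum} of two determinant-type terms, not the product you anticipated, and it has a root precisely at $n=2q+1$---so any sloppiness about ``roots $\le 2q+1$'' is fatal. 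Your alternative linear-functional idea is just monomial extraction in disguise and does not sidestep the cross terms.
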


\begin{proof}
Let us rewrite $\beta$ as
\begin{multline*}
\beta= \sum_{i,j,k<h} x_i y_j \omega_{k,h} - 2(n-3) \sum_{i,k<h} (x_iy_k + x_ky_i) \omega_{k,h} - (n-3) \sum_{i,k<h} x_iy_i \omega_{k,h}+ \\
 +2(n-2)(n-3) \sum_{k<h} x_ky_k \omega_{k,h} .
\end{multline*}
Let $b_q$ be the coefficient in $\alpha^{q-1}\beta$ of the monomial 
\[x_1\omega_{1,2} x_3\omega_{3,4} \dots x_{2q-1} \omega_{2q-1,2q} y_{2q+1}.\]
This monomial is defined for $n\geq 2q+1$ and we will show that $b_q \neq 0$ for $n>2q+1$.
The number $b_q$ coincides with the coefficient of the same monomial in the product 
\[ \alpha^{q-1} \Big( \sum_{i,j,k<h} x_i y_j \omega_{k,h} - 2(n-3) \sum_{i,k<h}  x_ky_i \omega_{k,h} \Big).\]
With further manipulation, we obtain that $b_q$ is the coefficient of the above monomial in the expression
\[ 3\alpha^q y_{2q+1} + n \alpha^{q-1} \sum_{k<h} x_k \omega_{k,h} y_{2q+1} .\]
Using the computation in the proof of \Cref{lemma:a^k} we obtain 
\begin{align*}
 b_q &= 3(-1)^q q! n^{q-1}(n-2q)+ nq(-1)^{q-1} (q-1)!n^{q-2}(n-2q+2) \\
 &= 2 (-1)^q q! n^{q-1}(n-2q-1).
\end{align*}
The number $b_q$ is non zero for $n>2q+1$.
\end{proof}

\begin{proof}[Proof of \Cref{thm:MHP_Groth_ring}]
It is enough to prove that the Hodge polynomial of $\UB$ in the Grothendieck ring of $\SL(\Q)$ is 
\[\sum_{i=0}^{\lfloor \frac{n-1}{2} \rfloor } [\V_i] u^{2i}v^{3i}  + \sum_{i=1}^{\lfloor \frac{n}{2} \rfloor -1 }  [\V_{i-1}] u^{2i+1}v^{3i+1}\]
Observe that $\operatorname{Im} \dd^{q,p}=0$ for $q>p+1$ by \Cref{cor:q<p+1}.
From \Cref{lemma:alpha} and \Cref{lemma:a^k} we have that the elements $\alpha^k$ for $2k<n$ generate  as $\SL(\Q)$-module a subspace of dimension at least $k+1$ in $H^{k,k}(\UB,\dd)$.
Analogously, from \Cref{lemma:beta} and \Cref{lemma:a^kb} the elements $\alpha^{k-1}\beta$ for $2k+1<n$ generate as $\SL(\Q)$-module a subspace of dimension at least $k$ in $H^{k,k+1}(\UB,\dd)$.
Since the Betti numbers of $\UB^{\bigcdot,\bigcdot}$ (\Cref{thm:Poin_Cn}) coincides with the above dimensions, we have that $H^{2k}(\UB) \cong \V_k u^{2k}v^{3k}$ and $H^{2k+1}(\UB) \cong \V_{k-1} u^{2k+1}v^{3k+1}$.
\end{proof}

\section{The cohomology ring}
In this section we determine the cup product structure in the cohomology of $\Uconfn$ and we prove the formality result.

In the following we consider graded algebras with an action of $\SL(\Q)$.
We will write $(x_i \mid i \in I)_{\SL(\Q)}$ for the ideal generated by the elements $Mx_i$ for all $M \in \SL(\Q)$ and all $i \in I$.

\begin{thm}\label{thm:main_ring}
The cohomology ring of $\Uconfn$ is isomorphic to
\[ \Lambda^\bigcdot \V_1 \otimes \faktor{S^\bigcdot \V_1[b]}{(a^{\lfloor \frac{n+1}{2} \rfloor}, a^{\lfloor \frac{n}{2} \rfloor}b, b^2)_{\SL(\Q)}},\]
where $a$ is a non-zero degree-one element in $V(1) \subset \V_1$ and $b$ an $\SL(\Q)$-invariant indeterminate of degree $3$.
\end{thm}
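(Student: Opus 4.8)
The plan is to deduce the ring structure from the additive description already obtained in \Cref{thm:MHP_Groth_ring}, together with the decomposition $A^{\bigcdot,\bigcdot} \cong B^{\bigcdot,\bigcdot} \otimes_\Q D^{\bigcdot,0}$ from eq.~\eqref{eq:iso}, which is a decomposition of $\Sn \times \SL(\Q)$-modules and of algebras. Taking $\Sn$-invariants gives $\UA^{\bigcdot,\bigcdot} = \UB^{\bigcdot,\bigcdot} \otimes_\Q D^{\bigcdot}$, and passing to cohomology (using that $D$ has zero differential and that we are over a field) yields $H^\bigcdot(\Uconfn) \cong H^\bigcdot(\UB) \otimes_\Q D^\bigcdot$ as bigraded algebras with $\SL(\Q)$-action. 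The factor $D^\bigcdot = \Q[\gamma,\overline\gamma]/(\gamma^2,\gamma\overline\gamma,\overline\gamma^2)$ is exactly $\Lambda^\bigcdot \V_1$, the cohomology of $E$, since $\gamma,\overline\gamma$ span a copy of $\V_1$ in degree $1$ and square to zero. So the real content is identifying the ring $H^{\bigcdot,\bigcdot}(\UB,\dd)$ with $S^\bigcdot \V_1[b]/(a^{\lfloor (n+1)/2\rfloor}, a^{\lfloor n/2\rfloor}b, b^2)_{\SL(\Q)}$.

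For that, first I would use the elements $\alpha, \overline\alpha \in \UB^{1,1}$ and $\beta \in \UB^{1,2}$ from \Cref{lemma:alpha} and \Cref{lemma:beta}, which are cocycles. Under the $\SL(\Q)$-action, $\alpha$ and $\overline\alpha$ span a copy of $\V_1$ sitting in $H^{1,1}$ (the weight filtration puts them in the irreducible $\V_1$ of \Cref{thm:MHP_Groth_ring} with $i=1$), so the subalgebra they generate is a quotient of $S^\bigcdot \V_1$; write $a$ for a highest weight vector, realised by $\alpha$ up to the torus action. Similarly $\beta$ (together with its $\SL(\Q)$-orbit, which lands in the $\V_0 = $ trivial summand of $H^{1,2}$ — note $\beta$ is defined to be $\SL(\Q)$-equivariant only after symmetrisation, so one must check the orbit of $\beta$ spans a trivial rep) gives an invariant element $b$ of degree $3$, i.e. bidegree $(1,2)$. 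The map $S^\bigcdot \V_1[b] \to H^{\bigcdot,\bigcdot}(\UB)$ sending the generators of $S^\bigcdot\V_1$ to $\alpha,\overline\alpha$ and $b \mapsto \beta$ is then a well-defined $\SL(\Q)$-equivariant algebra homomorphism. By \Cref{thm:MHP_Groth_ring}, $H^{2k,*}(\UB) = \V_k$ and $H^{2k+1,*}(\UB) = \V_{k-1}$ for the relevant ranges and both vanish outside them; since $\alpha^k$ (by \Cref{lemma:a^k}) is a non-zero highest-weight vector of $\V_k \subseteq H^{2k}$ and $\alpha^{k-1}\beta$ (by \Cref{lemma:a^kb}) is a non-zero highest-weight vector of $\V_{k-1} \subseteq H^{2k+1}$, the homomorphism is surjective in every bidegree, hence surjective.

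Next I would identify the kernel. On dimension grounds, the relations forced are: $\alpha^k$ must vanish once $2k \geq n$ (i.e. $k = \lfloor n/2\rfloor + [n \text{ even}]$... more precisely $\alpha^q \neq 0$ iff $n > 2q$, so the first vanishing power is $q = \lceil n/2 \rceil = \lfloor (n+1)/2\rfloor$); $\alpha^{k-1}\beta$ vanishes once $2k+1 \geq n$, i.e. $\alpha^{\lfloor n/2\rfloor}\beta = 0$ is the first such relation; and $\beta^2 = 0$ because it lives in bidegree $(2,4)$, which has $q = 4 > p+1 = 3$, so $H^{2,4}(\UB) = 0$ by \Cref{cor:q<p+1} (and $\Im\dd$ there is zero, so $\beta^2$ is genuinely zero, not merely a coboundary). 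Let $R = S^\bigcdot\V_1[b]/(a^{\lfloor (n+1)/2\rfloor}, a^{\lfloor n/2\rfloor}b, b^2)_{\SL(\Q)}$; the surjection factors through $R$. To conclude it is an isomorphism I would compute the bigraded $\SL(\Q)$-character of $R$ and check it matches the Hodge polynomial of $\UB$ from \Cref{thm:MHP_Groth_ring}. Concretely $R$ has, in each cohomological degree, one summand $\V_k$ in degree $2k$ for $k < \lfloor (n+1)/2\rfloor$ (generated by $a^k$ and its $\SL(\Q)$-orbit, i.e. $S^k\V_1$) and one summand generated by $a^{k-1}b$, namely $S^{k-1}\V_1 = \V_{k-1}$, in degree $2k+1$ for $k-1 < \lfloor n/2\rfloor$ and $k \geq 1$; the $b^2$ relation kills everything of higher $b$-degree. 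This is precisely the right-hand factor of eq.~\eqref{eq:MH poly in G ring}, so the graded dimensions agree and the surjection is an isomorphism. Tensoring with $\Lambda^\bigcdot\V_1 = D^\bigcdot$ gives the claimed ring.

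The main obstacle is the bookkeeping around the $\SL(\Q)$-equivariance of $\beta$ and the precise generators of the ideal: one must verify that the $\SL(\Q)$-orbit of $\beta$ really spans a trivial subrepresentation of $H^{1,2}(\UB)$ (so that $b$ is a genuine invariant indeterminate and the quotient makes sense as written), and that the three listed relations — with the floor functions correctly matching the parity thresholds $n > 2q$ and $n > 2q+1$ from Lemmas~\ref{lemma:a^k} and~\ref{lemma:a^kb} — actually generate the full kernel rather than merely a subideal. The cleanest way around this is to not argue about generators of the kernel directly but to show $R \twoheadrightarrow H^{\bigcdot}(\UB)$ is an isomorphism by a dimension count in each bidegree, which reduces everything to the already-established \Cref{thm:MHP_Groth_ring}; the $\SL(\Q)$-structure of $R$ in each bidegree is elementary since $S^k\V_1 = \V_k$ and the $b$-tower is one-dimensional over $S^\bigcdot\V_1$ modulo $b^2$.
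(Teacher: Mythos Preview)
Your proposal is correct and follows essentially the same route as the paper: reduce to $H^{\bigcdot}(\UB)$ via $\UA = \UB \otimes D$, define the $\SL(\Q)$-equivariant algebra map sending $a \mapsto \alpha$ and $b \mapsto \beta$, use \Cref{thm:MHP_Groth_ring} together with \Cref{lemma:a^k,lemma:a^kb} for surjectivity, and finish with a dimension count. Your write-up is in fact more detailed than the paper's, which compresses the argument to a few lines.

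One small point: your justification of $\beta^2=0$ is both over-engineered and mislabelled. Under the paper's stated grading ($x_i,y_i$ in degree $(1,0)$, $\omega_{i,j}$ in degree $(0,1)$) one computes $\beta \in \UB^{2,1}$, not $\UB^{1,2}$ (the paper itself has this typo), so $\beta^2$ lands in bidegree $(4,2)$, where \Cref{cor:q<p+1} does \emph{not} apply. The paper's argument is simpler and robust: $\beta$ has odd total degree $3$ in a graded-commutative algebra, hence $\beta^2=0$. Your worry about $\beta$ spanning a trivial $\SL(\Q)$-representation is legitimate but is already settled by \Cref{thm:MHP_Groth_ring}, which identifies $H^3(\UB) \cong \V_0$.
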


\begin{proof}
It is enough to prove that $H^\bigcdot(\UB) \cong S^\bigcdot \V_1[b] / (a^{\lfloor \frac{n+1}{2} \rfloor}, a^{\lfloor \frac{n}{2}\rfloor}b, b^2)_{\SL(\Q)}$.
Define the morphism $\varphi \co S^\bigcdot \V_1[b] / (a^{\lfloor \frac{n+1}{2} \rfloor}, a^{\lfloor \frac{n}{2}\rfloor}b, b^2)_{\SL(\Q)} \to H^\bigcdot(\UB)$  that sends $a,b$ to $\alpha, \beta$ respectively.
It is well defined because $H^{k}(\UB)=0$ for $k\geq n$ and $\beta^2=0$ since it has odd degree.
The map $\varphi$ is surjective since $H^\bigcdot(\UB)$ is generated by $\alpha^i$ and $\alpha^i \beta$ as $\SL(\Q)$-module by \Cref{thm:MHP_Groth_ring}.
A dimensional reasoning shows the injectivity of the map $\varphi$.
\end{proof}

\begin{cor}
The cohomology $H^\bigcdot(\Uconfn)$ is generated as an algebra in degrees one, two and three.
\end{cor}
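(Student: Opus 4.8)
The plan is to read off the generation statement directly from \Cref{thm:main_ring}. By that theorem, $H^\bigcdot(\Uconfn) \cong \Lambda^\bigcdot \V_1 \otimes S^\bigcdot \V_1[b] / (a^{\lfloor \frac{n+1}{2} \rfloor}, a^{\lfloor \frac{n}{2}\rfloor}b, b^2)_{\SL(\Q)}$, so it suffices to exhibit generators of this algebra in low degree. The exterior factor $\Lambda^\bigcdot \V_1$ is generated by $\V_1$, which sits in degree one (it corresponds to the $H^1(E)$-part $\gamma,\overline\gamma$). The symmetric-algebra factor is generated by $\V_1 \subset S^1\V_1$ in degree two (coming from $\alpha,\overline\alpha$, i.e.\ the image of $a$, which has degree two) together with the invariant element $b$ in degree three (coming from $\beta$). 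Hence every generator lives in degree one, two, or three, and the whole algebra is generated by these.

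Concretely, I would phrase the proof as follows: by \Cref{thm:main_ring} the ring $H^\bigcdot(\Uconfn)$ is a quotient of $\Lambda^\bigcdot \V_1 \otimes S^\bigcdot \V_1[b]$, and the latter is generated as a $\Q$-algebra by the degree-one copy of $\V_1$ inside $\Lambda^\bigcdot \V_1$, the degree-two copy of $\V_1$ inside $S^\bigcdot \V_1$ (spanned over $\SL(\Q)$ by $a$), and the degree-three element $b$. Taking images of algebra generators under the quotient map still gives algebra generators, so $H^\bigcdot(\Uconfn)$ is generated in degrees one, two, three. In terms of the explicit cohomology classes this says $H^\bigcdot(\Uconfn)$ is generated by $\gamma,\overline\gamma \in H^1$, by $\alpha,\overline\alpha \in H^2$, and by $\beta \in H^3$ (together with their $\SL(\Q)$-translates, which amounts to the same subspaces).

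There is essentially no obstacle here: the statement is a formal consequence of the presentation in \Cref{thm:main_ring}, the only point worth noting being the degree bookkeeping — that $a$ has cohomological degree two (it is $\alpha \in A^{1,1}$, degree $1+1=2$) and $b$ has cohomological degree three (it is $\beta \in A^{1,2}$, degree $1+2=3$), while the generators of $\Lambda^\bigcdot\V_1$ have degree one (they are $\gamma,\overline\gamma \in A^{1,0}$). Once these degrees are recorded, the corollary is immediate. If anything needs care it is merely confirming that no generator is forced into degree zero or into degree $\ge 4$, which is clear from the three ingredients $\Lambda^\bigcdot\V_1$, $S^\bigcdot\V_1$, and the adjoined $b$.
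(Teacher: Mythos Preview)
Your argument is correct and matches the paper's own proof, which simply states that a minimal set of generators is $\alpha,\overline{\alpha},\beta,\gamma,\overline{\gamma}$; you have just written out the degree bookkeeping more explicitly. Note also that since $\{\alpha,\overline{\alpha}\}$ and $\{\gamma,\overline{\gamma}\}$ each already span the relevant copy of $\V_1$, no further $\SL(\Q)$-translates are needed.
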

\begin{proof}
A minimal set of generators is given by $\alpha, \overline{\alpha}, \beta, \gamma, \overline{\gamma}$.
\end{proof}

\begin{cor}\label{cor:formality}
The space $\Uconfn$ is formal over the rationals.
\end{cor}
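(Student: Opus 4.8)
The plan is to show that the commutative differential bigraded algebra $\UA^{\bigcdot,\bigcdot}$ is formal; since it is a rational model for $\Uconfn$, this yields the corollary. That $\UA^{\bigcdot,\bigcdot}=(A^{\bigcdot,\bigcdot})^{\Sn}$ is a model for $\Uconfn=\Confn/\Sn$ is the chain-level form of the Transfer Theorem used above: $A^{\bigcdot,\bigcdot}$ is an $\Sn$-equivariant model for $\Confn$, and over $\Q$ the functor of $\Sn$-invariants is exact, hence it commutes with cohomology and carries an equivariant quasi-isomorphism to a quasi-isomorphism. Moreover $\UA^{\bigcdot,\bigcdot}\cong\UB^{\bigcdot,\bigcdot}\otimes_\Q D^{\bigcdot,0}$ with $D^{\bigcdot,0}$ carrying the zero differential, and formality is stable under tensoring over $\Q$ with a zero-differential algebra, so it suffices to prove that $(\UB^{\bigcdot,\bigcdot},\dd)$ is formal.

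For this I would build an explicit formality quasi-isomorphism from the cocycles of \Cref{lemma:alpha} and \Cref{lemma:beta}. Let $C\subseteq\UB^{\bigcdot,\bigcdot}$ be the subalgebra generated by $\alpha$, $\overline\alpha$ and $\beta$. Since $\dd$ is a derivation killing all three generators, $C$ consists entirely of cocycles, so $(C,0)$ is a sub-\dga{} and the inclusion $\iota\co(C,0)\hookrightarrow(\UB^{\bigcdot,\bigcdot},\dd)$ is a morphism of \dga s. On cohomology $\iota$ is surjective, because by \Cref{thm:MHP_Groth_ring} (equivalently \Cref{thm:main_ring}) the ring $H^\bigcdot(\UB)$ is generated by $[\alpha]$, $[\overline\alpha]$, $[\beta]$. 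To get injectivity it is enough to check that the defining relations of
\[H^\bigcdot(\UB)\cong S^\bigcdot\V_1[b]\big/\big(a^{\lfloor\frac{n+1}{2}\rfloor},\ a^{\lfloor\frac n2\rfloor}b,\ b^2\big)_{\SL(\Q)}\]
already hold in $\UB^{\bigcdot,\bigcdot}$ at the chain level, namely $\beta^2=0$ (immediate, $\beta$ having odd total degree), $\alpha^i\overline\alpha^j=0$ for $i+j\ge\lfloor\frac{n+1}2\rfloor$, and $\alpha^i\overline\alpha^j\beta=0$ for $i+j\ge\lfloor\frac n2\rfloor$. Since $\alpha,\overline\alpha$ span an $\SL(\Q)$-subrepresentation isomorphic to $\V_1$ (one checks $Y\cdot\alpha=\alpha+\overline\alpha$, $Y\cdot\overline\alpha=\overline\alpha$ and the analogous identities for the other generators of $\SL(\Q)$), the degree-$k$ monomials in $\alpha,\overline\alpha$ span the image of the irreducible representation $S^k\V_1$, so the first family reduces to the single vanishing $\alpha^{\lfloor\frac{n+1}2\rfloor}=0$, and the $\beta$-family is handled the same way. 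Granting these, the algebra map $R\defeq S^\bigcdot\V_1[b]/(a^{\lfloor\frac{n+1}{2}\rfloor},a^{\lfloor\frac n2\rfloor}b,b^2)_{\SL(\Q)}\to\UB^{\bigcdot,\bigcdot}$ sending $a\mapsto\alpha$, $b\mapsto\beta$ is well defined with image $C$, so $\dim_\Q C\le\dim_\Q R=\dim_\Q H^\bigcdot(\UB)$; together with surjectivity of $\iota$ on cohomology this forces $C\xrightarrow{\sim}H^\bigcdot(\UB)$, hence $\iota$ is a quasi-isomorphism and $\UB^{\bigcdot,\bigcdot}$ is formal. Tensoring with $D^{\bigcdot,0}$ gives formality of $\UA^{\bigcdot,\bigcdot}$, hence of $\Uconfn$.

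The point that needs genuine work — and which I expect to be the main obstacle — is the chain-level vanishing of those maximal products. Reducing products of the $\omega_{k,h}$ to forests via the three-term relation of $A^{\bigcdot,\bigcdot}$ and then using $(x_a-x_b)\omega_{a,b}=0$ (resp.\ $(y_a-y_b)\omega_{a,b}=0$) to collapse the $x$- and $y$-decorations along connected components of the forest, each surviving monomial of $\alpha^{\lfloor\frac{n+1}2\rfloor}$ (resp.\ $\alpha^i\overline\alpha^j\beta$) is a product of $\omega$'s times a product of collapsed linear forms; when the number of components is too small every such monomial contains a square $x_c^2$ or $y_c^2$ and dies, and in the borderline case one must run the explicit coefficient computation of \Cref{lemma:a^k}, resp.\ \Cref{lemma:a^kb}. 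That computation already exhibits the controlling coefficient as $(-1)^q q!\,n^{q-1}(n-2q)$, resp.\ $2(-1)^q q!\,n^{q-1}(n-2q-1)$, which degenerates to zero exactly there; one then has to see, by the same expansion into forests with collapsed decorations — where the remaining monomials cancel in antisymmetric pairs $x_cx_{c'}+x_{c'}x_c=0$ — that all the other maximal coefficients vanish as well. I expect this bookkeeping, rather than anything conceptual, to be the only real content of the argument.
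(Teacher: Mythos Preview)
Your strategy is identical to the paper's: take the subalgebra $C$ generated by $\alpha,\overline{\alpha},\beta$ (the paper calls it $K$), observe it consists of cocycles, and argue that the inclusion into $\UB^{\bigcdot,\bigcdot}$ is a quasi-isomorphism. The divergence, and the gap, is in how you obtain injectivity of $C\to H^\bigcdot(\UB)$.

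You try to bound $\dim_\Q C$ from above by verifying that the presentation relations of $H^\bigcdot(\UB)$ already hold at the chain level in $\UB$, and you flag this as the ``main obstacle'' while only sketching a forest/coefficient argument. That argument is not carried out, and the vanishing of a single coefficient (the one from \Cref{lemma:a^k} or \Cref{lemma:a^kb}) at the borderline value of $q$ is nowhere near showing that the entire element $\alpha^{\lfloor(n+1)/2\rfloor}$ vanishes in $\UB$; you would need every coefficient in a suitable basis to vanish, and your ``antisymmetric pairs'' remark does not cover this. So as written the proof is incomplete.

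The paper closes this gap with a one-line bidegree observation you are missing. Since $\alpha,\overline{\alpha}\in\UB^{1,1}$, $\beta\in\UB^{1,2}$, and $\beta^2=0$, the subalgebra $C$ is concentrated in bidegrees $(i,i)$ and $(i,i+1)$. The differential has bidegree $(2,-1)$, so any coboundary landing in those spots would come from $\UB^{i-2,i+1}$ or $\UB^{i-2,i+2}$, both of which vanish by \Cref{cor:q<p+1}. Hence $C\cap\operatorname{Im}\dd=0$ and $C\to H^\bigcdot(\UB)$ is injective with no computation at all. Combined with the surjectivity you already have from \Cref{thm:MHP_Groth_ring}, the inclusion is a quasi-isomorphism and formality follows. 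As a byproduct this even yields the chain-level vanishings you were after: a cocycle in $C$ whose class is zero must itself be zero, so $\alpha^{\lfloor(n+1)/2\rfloor}=0$ in $\UB$ drops out for free.
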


\begin{proof}
We prove that $\UB$ is formal. 
Consider the subalgebra $K^{\bigcdot, \bigcdot}$ of $\UB^{\bigcdot, \bigcdot}$ generated by $\alpha, \overline{\alpha}, \beta$ endowed with the zero differential.
It is concentrated in degrees $(i,i)$ and $(i,i+1)$ because $\beta^2=0$.
Since $K \cap \operatorname{Im} \dd=0$ (\Cref{cor:q<p+1}), $K \hookrightarrow UB$ is a quasi-isomorphism.
The fact that $K\cong H(UB)$ implies that the algebra $\UB$ is formal.
As a consequence $\UA$ is formal.
The space $\Uconfn$ is formal since our model $\UA$ is equivalent to the Sullivan model.
\end{proof}

\subsection*{Acknowledgements}
I would like to thank the referee for several useful suggestions.

\bibliography{Config_elliptic_curve}{}
\bibliographystyle{amsalpha}
\end{document}